\theoremstyle{plain}
\newtheorem{theorem}{Theorem}[section]
\newtheorem{proposition}[theorem]{Proposition}
\numberwithin{figure}{section}
\numberwithin{table}{section}
\newcounter{asnr}
\ifnum\value{asnr}=0 \stepcounter{asnr} 
	\newcounter{defnr}
\ifnum\value{defnr}=0 \stepcounter{defnr} 
		\newcommand{\R}{\mathbb{R}}
		\newcommand{\N}{\mathbb{N}}
		\newcommand{\pl}{\partial}
		\newcommand{\eps}{\varepsilon}
\newcommand\smallO{
	\mathchoice
	{{\scriptstyle\mathcal{O}}}
	{{\scriptstyle\mathcal{O}}}
	{{\scriptscriptstyle\mathcal{O}}}
	{\scalebox{.7}{$\scriptscriptstyle\mathcal{O}$}}
}
		\numberwithin{equation}{section} \allowdisplaybreaks
\title[Boundary determination]{
Boundary determination of coefficients appearing in a perturbed weighted $p$-Laplace equation}
\author[N. Kumar, T. Sarkar and M. Vashisth]
		{Nitesh Kumar, Tanmay Sarkar and Manmohan Vashisth}
\date{}
\address[]{Nitesh Kumar,
			Department of Mathematics, 
			Indian Institute of Technology Jammu,
			NH-44 Bypass Road, Nagrota PO, Jagti, 
			Jammu - 181 221, India.}
		\email[]{2019rma0004@iitjammu.ac.in}
\address[]{Tanmay Sarkar,
			Department of Mathematics, 
			Indian Institute of Technology Jammu,
			NH-44 Bypass Road, Nagrota PO, Jagti, 
			Jammu - 181 221, India.}
		\email[]{tanmay.sarkar@iitjammu.ac.in}
\address[]{Manmohan Vashisth,
			Department of Mathematics, 
			Indian Institute of Technology Ropar,
			Rupnagar, Punjab - 140001, India.}
		\email[]{manmohanvashisth@iitrpr.ac.in, manmohanvashisth@gmail.com}
\subjclass[2021]{Primary 35R30, 35J60.}
		\keywords{Inverse problems, $p$-Laplace operator, Dirichlet-to-Neumann map, boundary determination.
		}
		\thanks{}
\begin{document}
\begin{abstract}
We study an inverse boundary value problem associated with $p$-Laplacian which is further perturbed by a linear second order term, defined on a bounded set $\Omega$ in $\R^n, n\geq 2$. We recover the coefficients at the boundary from the boundary measurements which are given by the Dirichlet to Neumann map. Our approach relies on the appropriate asymptotic expansion of the solution and it allows one to recover the coefficients pointwise. Furthermore, by considering the localized Dirichlet-to-Neumann map around a boundary point, we provide a procedure to reconstruct the normal derivative of the coefficients at that boundary point. 
\end{abstract}
\maketitle
\section{Introduction} 
In this paper, we consider the following weighted $p$-Laplace equation which is perturbed by a linear term
\begin{equation}\label{gov_eqn}
	\begin{cases}
	\nabla\cdot (\sigma(x)\nabla u(x) + \gamma(x)|\nabla u(x)|^{p-2}\nabla u(x)) = 0, &\quad x\in~\Omega, \\
	u(x)=f(x), &\quad x\in\pl\Omega,
	\end{cases}
\end{equation}
where the domain $\Omega\subseteq\R^n,n \geq 2$ is a bounded open set with smooth boundary $\pl\Omega$. Throughout the paper, we assume that the exponent $p$ satisfies $1<p<\infty$. Furthermore, we consider the coefficients $\sigma$ and $\gamma$ are positive functions with the regularity $\sigma\in C^{\infty}(\overline{\Omega})$ and $\gamma\in L^{\infty}(\Omega)$ respectively. Moreover, the coefficients $\sigma$ and $\gamma$ are assumed to satisfy the following conditions:    
\begin{equation*}
			\begin{cases}
			0<\lambda <\sigma (x)<\lambda^{-1},\\
			0<m_1<\gamma (x)<m_1^{-1},
			\end{cases}
\end{equation*}
where $\lambda$, and $m_1$ are positive constants. The Dirichlet data $f$ is considered as follows:
$f\in W^{1/2,2}(\pl\Omega)$ whenever $p\in (1,2)$, and
$f\in W^{1-1/p,p}(\pl\Omega)$ whenever $p\in (2,\infty)$.
			
The nonlinear Dirichlet to Neumann map (DN-map) is defined by
\begin{equation}\label{dn}
	\Lambda_{\sigma,\gamma} (f) = (\sigma(x) + \gamma(x) |\nabla u |^{p-2} )\partial_\nu u |_{\partial \Omega},
\end{equation}
where $u$ is the unique solution of equation \eqref{gov_eqn} with the boundary value $f$. Throughout the paper, we assume that $\nu$ is the outer unit normal to $\partial\Omega$.
			
In case of $p=2$, the equation\eqref{gov_eqn} becomes a linear conductivity equation and there are several results related to boundary determination of conductivity coefficients appearing in the linear equation.  For instance,  in \cite{Kohn_Vogelius,sylvester1988inverse}  the smooth conductivity and all its derivatives at the boundary are recovered from the boundary measurements of the solution. The authors in \cite{Kohn_Vogelius,sylvester1988inverse} proved the uniqueness of conductivity together with its derivatives at the boundary of domain. Later 
Nachman in \cite{nac} recovered the coefficient at the boundary by assuming that $\gamma\in W^{1,m}(\Omega),~m>n$. Under the assumption $m>n/2$, Nachman in \cite{nac} also recovered the first normal derivative of $\gamma \in W^{2,m}(\Omega)$.
Furthermore, Brown \cite{brown2001recovering} described a procedure for recovering $\gamma|_{\pl\Omega}$ in a pointwise manner by assuming $\gamma\in L^{\infty}(\Omega)$.  For the conductivity equation, Nakamura and Tanuma \cite{nt, ntt} provided a formula for reconstructing the conductivity and its higher order derivatives from the localized DN-map.
Moreover, there are several local boundary determination results, for instance, one can refer to \cite{alessandrini2009local,kang2002boundary,nakamura2005numerical} and references therein.
			
Motivated by the applications in image processing, fluid mechanics and modeling of sand-piles, Calder\'{o}n-type inverse problems for the $p$-Laplace equation are studied by several authors. For instance, Salo and Zhong \cite{salo2012inverse} studied the boundary uniqueness problem associated with $p$-Laplace equation for $n\geq 2$.
Brander in \cite{bt} recovered the gradient of $\gamma$ at the boundary using the Rellich's type identity. Furthermore, C\^{a}rstea and Kar in\cite{carstea2019reconstruction} considered the weighted perturbed $p$-Laplace equation and provided a procedure for reconstruction of the coefficients in the interior of the domain using the complex geometric optics solution for $n\geq 3$. However, up to our knowledge, the boundary determination problem for weighted perturbed $p$-Laplace equation concerning the coefficient $\gamma$  has not been studied in the literature. 

In this paper, we will give a procedure for recovery of the coefficients at boundary of the domain. To do this, we use the  $\epsilon$-expansion of solution which varies according to the choice of $p$. The approach of using the $\epsilon$-expansion or linearization for solving the inverse problems for nonlinear elliptic equations have been used by several authors, see for example \cite{carstea2019reconstruction,
cm,Ali_Lauri,Catalin etal,Isakov,Isakov_Nachman,Isakov_Sylvester,KN,Kian,KU-1,KU-2,Lassas_Toni_Lin_Salo}. Motivated by \cite{carstea2019reconstruction,cm,KN} our idea is to find an appropriate integral identity, from which reconstruction of the coefficient $\gamma$ at the boundary of the domain is possible. Using the approach used in \cite{cm} we derived the integral identity by constructing a sequence of explicit functions which are solutions of conductivity equation. Such solutions can be used to extract the Taylor series of the coefficient $\gamma$ at the boundary point from the knowledge of $\Lambda_{\sigma,\gamma}$.

We can now state our boundary determination results.
\begin{theorem}(Reconstruction of coefficients)\label{thm_1} \\
Suppose $p>1$ with $p\neq 2$ and $\Omega \subseteq\R^n,n\geq 2$ is a bounded domain with smooth boundary $\partial\Omega$. Furthermore, assume that the coefficients are continuous on $\overline{\Omega}$ and bounded below by a positive constant.
Then the coefficients $\sigma$ and $\gamma$ in \eqref{gov_eqn} can be reconstructed at the boundary from the knowledge of given DN-map $\Lambda_{\sigma,\gamma}$.
\end{theorem}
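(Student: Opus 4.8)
The plan is to separate the two coefficients by exploiting the different homogeneity of the linear term and the $p$-Laplacian term under a rescaling of the boundary data. For a fixed admissible $f$ and a small parameter $\epsilon>0$, let $u_\epsilon$ denote the solution of \eqref{gov_eqn} with data $\epsilon f$, and write $u_\epsilon=\epsilon v+r_\epsilon$. Since $\sigma\nabla u_\epsilon$ is of order $\epsilon$ whereas $\gamma|\nabla u_\epsilon|^{p-2}\nabla u_\epsilon$ is of order $\epsilon^{p-1}$, the leading profile $v$ solves the conductivity equation $\nabla\!\cdot(\sigma\nabla v)=0$ with $v|_{\partial\Omega}=f$ when $p>2$, and the weighted $p$-Laplace equation $\nabla\!\cdot(\gamma|\nabla v|^{p-2}\nabla v)=0$ when $1<p<2$; this dichotomy is exactly the $p$-dependence of the expansion. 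Inserting the expansion into \eqref{dn} yields, on $\partial\Omega$,
\begin{equation*}
\Lambda_{\sigma,\gamma}(\epsilon f)=\epsilon\,\sigma\,\partial_\nu v+\epsilon^{p-1}\,\gamma\,|\nabla v|^{p-2}\,\partial_\nu v+o(\epsilon)+o(\epsilon^{p-1}),
\end{equation*}
and since $p\neq 2$ the exponents $1$ and $p-1$ are distinct, so the linear and nonlinear contributions are isolated by extracting the corresponding power of $\epsilon$ from the known map $\Lambda_{\sigma,\gamma}$.

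The first task is to make this expansion rigorous: I would establish well-posedness and a priori $W^{1,p}$ bounds for $u_\epsilon$, prove continuous dependence on $\epsilon$, and control the remainder $r_\epsilon$ so that the displayed asymptotics hold in a suitable negative-order boundary space. Having done so, the order-$\epsilon$ part supplies the linear Dirichlet-to-Neumann operator $f\mapsto\sigma\,\partial_\nu v|_{\partial\Omega}$, and the classical boundary-determination argument for the conductivity equation \cite{Kohn_Vogelius,brown2001recovering} then reconstructs $\sigma|_{\partial\Omega}$ pointwise. The order-$\epsilon^{p-1}$ part supplies the functional $f\mapsto\gamma\,|\nabla v|^{p-2}\,\partial_\nu v|_{\partial\Omega}$, from which $\gamma$ must be extracted.

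To recover $\gamma$ at a fixed point $x_0\in\partial\Omega$, following the integral-identity strategy of \cite{cm,carstea2019reconstruction}, I would pass to the weak formulation and test against an explicit solution $w$ of the conductivity equation. Using $\nabla\!\cdot(\sigma\nabla w)=0$ and integration by parts in the linear term, the identity reads
\begin{equation*}
\int_{\partial\Omega}\Lambda_{\sigma,\gamma}(\epsilon f)\,w\,dS-\epsilon\int_{\partial\Omega}\sigma\,f\,\partial_\nu w\,dS=\int_{\Omega}\gamma\,|\nabla u_\epsilon|^{p-2}\nabla u_\epsilon\cdot\nabla w\,dx,
\end{equation*}
whose left-hand side is known once $\sigma|_{\partial\Omega}$ has been recovered. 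I would choose $w=w_N$ to be a sequence of conductivity solutions concentrated near $x_0$ at scale $1/N$ --- in boundary normal coordinates, a cutoff times $\exp\!\big(iN\langle x-x_0,\tau\rangle-N\,d(x,\partial\Omega)\big)$ with $\tau$ a unit tangent and $d$ the boundary distance --- and take $f=w_N|_{\partial\Omega}$, so that the right-hand side is of order $\epsilon^{p-1}\int_\Omega\gamma\,|\nabla v|^{p-2}\nabla v\cdot\nabla w_N\,dx$. Because $w_N$ concentrates at $x_0$ and $\gamma$ is continuous, a scaling and localization argument shows that this integral, after the appropriate normalization, converges as $N\to\infty$ to $\gamma(x_0)$ times a constant depending only on $p$, $n$ and the already-known value $\sigma(x_0)$, thereby recovering $\gamma(x_0)$; the same family with higher-order profiles yields the successive Taylor coefficients of $\gamma$ at $x_0$.

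The main obstacle I anticipate is the simultaneous control of the two limiting processes together with the degeneracy of the weight $|\nabla v|^{p-2}$. The concentrated test functions have gradients of size $N$, so the remainder estimate $r_\epsilon=o(\epsilon)$ (respectively $o(\epsilon^{p-1})$ when $1<p<2$) must be shown to survive the limit $N\to\infty$, which requires uniform-in-$N$ energy bounds and a careful ordering of the limits in $\epsilon$ and $N$. Moreover, the factor $|\nabla v|^{p-2}$ is singular where $\nabla v$ vanishes for $1<p<2$ and degenerate for $p>2$, so one must verify that the concentrated solutions keep $|\nabla w_N|$ bounded below on the support of the cutoff and invoke the appropriate monotonicity and energy estimates for \eqref{gov_eqn}. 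Treating the range $1<p<2$, where the genuine leading profile solves the nonlinear $p$-Laplace equation rather than the conductivity equation, is the most delicate point of the argument.
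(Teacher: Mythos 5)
Your skeleton for $p>2$ --- small-data $\varepsilon$-expansion, an integral identity obtained by testing against solutions of $\nabla\cdot(\sigma\nabla w)=0$ (your identity is exactly \eqref{int1}), then concentrating solutions near $x_0$ --- is the paper's approach, but two of your steps have genuine gaps. The first is the range $1<p<2$. You keep small data $\varepsilon f$, so your leading profile $v$ solves the weighted $p$-Laplace equation, and this breaks the rest of your scheme. The corrector is then of size $\varepsilon^{3-p}$, and since the linearization of the $p$-Laplacian around a profile of amplitude $\varepsilon$ carries the factor $\varepsilon^{p-2}$, the corrector feeds back into the flux at exactly order $\varepsilon^{p-2}\cdot\varepsilon^{3-p}=\varepsilon$: the order-$\varepsilon$ coefficient of $\Lambda_{\sigma,\gamma}(\varepsilon f)$ is \emph{not} $\sigma\partial_\nu v$, and testing against conductivity solutions does not remove this contamination (it only removes the $\sigma\nabla r_\varepsilon$ part). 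Your displayed expansion is also internally inconsistent for $1<p<2$: a single profile $v$ cannot simultaneously make the order-$\varepsilon$ term a conductivity DN map and the order-$\varepsilon^{p-1}$ term the $p$-Laplacian flux. The paper avoids all of this by reversing the scaling: for $1<p<2$ it prescribes \emph{large} data $\varepsilon^{-1}f$ in \eqref{gov_eqn_2}; because the nonlinearity is sublinear, the linear term dominates at large amplitude, the leading profile again solves the conductivity equation \eqref{eq2}, and the estimate \eqref{est12} and identity \eqref{int12} go through exactly as for $p>2$. What you call ``the most delicate point'' is not something to overcome; it is the sign that the scaling must be reversed.

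The second gap is the extraction of $\gamma(x_0)$, even for $p>2$. Your identity contains no complex conjugation, so with $f=w_N|_{\partial\Omega}$ the known quantity is $\int_\Omega\gamma\,|\nabla w_N|^{p-2}\,\nabla w_N\cdot\nabla w_N\,dx$. For $w_N=\eta_M\exp\big(N(i\zeta\cdot x-\rho)\big)$ the leading part of $\nabla w_N\cdot\nabla w_N$ is proportional to $(i\zeta-\nabla\rho)\cdot(i\zeta-\nabla\rho)=|\nabla\rho|^2-|\zeta|^2-2i\,\zeta\cdot\nabla\rho$, which vanishes at $x_0$ by the normalization \eqref{temp_1}; together with the oscillation $e^{2iN\zeta\cdot x}$, this forces your normalized limit to be $0$, not $c\,\gamma(x_0)$, so no information on $\gamma$ is obtained. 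Moreover the degenerate weight $|\nabla v|^{p-2}$ sits on the concentrating family itself, and your proposed cure --- keeping $|\nabla w_N|$ bounded below on the support of the cutoff --- is impossible, since the factor $e^{-N\rho}$ collapses across that support. The ingredient you are missing is the paper's polarization step: from the known functional $I(u_0,w)$ of \eqref{defn_I} one forms $J_1,J_2$ by differentiating $z\mapsto I(u_1+z\overline{u_2},u_3)$ and $z\mapsto I(u_1+z u_2,u_3)$ at $z=0$, which produces $K(u_1,u_2,u_3)=\int_\Omega\beta_{u_1}\nabla u_2\cdot\nabla u_3\,dx$ with $\beta_{u_1}=\gamma|\nabla u_1|^{p-2}$ as in \eqref{b}, where $u_1$ is a \emph{fixed} real-valued conductivity solution with nowhere-vanishing gradient. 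The degenerate weight is thus carried by $u_1$, not by the concentrating family; choosing $u_3=\overline{u_2}$ and $u_2=u_M$ then gives the coercive quantity \eqref{k}, $\int_\Omega\beta_{u_1}|\nabla u_M|^2\,dx\to\beta_{u_1}(x_0)$, and $\gamma(x_0)$ follows from \eqref{b}. (A minor point: if you order the limits as the paper does --- $\varepsilon\to0$ first for each fixed datum, and only afterwards the concentration limit applied to the already-known functional --- the uniform-in-$N$ remainder bounds you worry about are never needed.)
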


\begin{theorem}(Reconstruction of normal derivatives of coefficients at the boundary)\label{thm_2} \\
Assume that $p\in (1,\infty)\backslash\{2\}$ and for $n\geq 2$, $\Omega \subseteq\R^n$ is a bounded domain with smooth boundary $\partial\Omega$. Suppose $\sigma\in C^{\infty}(\overline{\Omega})$ and $D^{\alpha'}_{x'}D^{\alpha_n}_{x_n} \gamma$ is continuous around $x_0$ for any multi-index $(\alpha',\alpha_n)$ such that $|\alpha'|+2\alpha_n\leq 2$, where $x=(x',x_n),~x'\in\R^{n-1}$.
Then the normal derivative of the coefficients can be evaluated at the boundary point of $\Omega$ from the knowledge of given DN-map $\Lambda_{\sigma,\gamma}$ associated to \eqref{gov_eqn}.
\end{theorem}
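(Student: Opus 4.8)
The plan is to reduce everything to a high-frequency analysis localized at the boundary point $x_0$, and to read off the normal derivative from the first genuine subleading coefficient in the resulting asymptotic expansion. First I would introduce boundary normal coordinates centered at $x_0$, so that after translation $x_0=0$, the boundary is $\{x_n=0\}$, the domain lies in $\{x_n>0\}$ near $x_0$, and $\pl_\nu=-\pl_{x_n}$ up to curvature corrections of lower order. In these coordinates the anisotropic hypothesis $|\alpha'|+2\alpha_n\le 2$ is exactly the regularity dictated by a parabolic boundary-layer scaling in which the tangential variables are resolved at scale $N^{-1/2}$ and the normal variable at scale $N^{-1}$; this is the scaling the test functions will carry.

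Next, following the construction indicated in the introduction and in \cite{cm}, I would build an explicit family $\{v_N\}_{N\ge 1}$ of approximate solutions of the linear conductivity equation $\nabla\cdot(\sigma\nabla v_N)=0$ that concentrate at $x_0$. The model profile is
\[
v_N(x)=\eta(x)\,\chi\!\bigl(N^{1/2}x'\bigr)\,e^{N(\mathrm{i}\,x'\cdot\omega-x_n)}\Bigl(1+N^{-1}b_1(x)+\cdots\Bigr),
\]
with $\omega$ a fixed tangential unit vector, $\eta$ a cutoff supported near $x_0$, $\chi$ a tangential concentration factor, and WKB amplitudes $b_j$ chosen recursively so that $\nabla\cdot(\sigma\nabla v_N)$ is smaller than any prescribed power of $N$ on the support. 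I would carry this expansion far enough to separate the boundary value from the normal-derivative contribution; the leading phase and amplitude are fixed by $\sigma(x_0)$ and its tangential data, which are already known from Theorem~\ref{thm_1}.

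Then I would invoke the $\epsilon$-linearization of \eqref{gov_eqn}, whose structure depends on $p$. Writing $f=\epsilon\,v_N|_{\pl\Omega}$ and expanding the solution in the amplitude $\epsilon$, the flux \eqref{dn} splits into a part homogeneous of degree one, carrying the linear term $\sigma\,\pl_\nu u$, and a part of degree $p-1$, carrying the nonlinear term $\gamma\,|\nabla u|^{p-2}\pl_\nu u$; since $p\ne 2$ these two homogeneities are distinct and can be separated by working in the regime appropriate to $p$ (namely $\epsilon\to 0$ when $p>2$ and $\epsilon\to\infty$ when $1<p<2$), so that in each case the conductivity equation governs the leading profile $u_1\approx v_N$. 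Extracting the degree-$(p-1)$ part and subtracting the contribution already determined by $\sigma$, I would isolate the nonlinear boundary flux $\gamma\,|\nabla v_N|^{p-2}\pl_\nu v_N$; pairing against a suitable conjugate test function and integrating by parts then converts it into an interior integral of the schematic form $\int_\Omega \gamma(x)\,|\nabla v_N|^{p}\,\psi\,dx$. Since $|\nabla v_N|$ is of size $N$ and the profile decays like $e^{-Nx_n}$ along the inward normal while $\chi(N^{1/2}x')$ localizes tangentially, each monomial $(x')^{\alpha'}x_n^{\alpha_n}$ from the Taylor expansion of $\gamma$ at $x_0$ contributes a relative factor of order $N^{-(|\alpha'|+2\alpha_n)/2}$. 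Hence, after normalizing by the leading power, the order-one term reproduces $\gamma(x_0)$ (as in Theorem~\ref{thm_1}), the would-be $N^{-1/2}$ term vanishes by the symmetry of the tangential profile, and the first genuine correction, at order $N^{-1}$, collects exactly the terms with $|\alpha'|+2\alpha_n=2$, namely the tangential Hessian of $\gamma$ together with $\pl_{x_n}\gamma(x_0)$. Subtracting the tangential contributions, which are already fixed by the known boundary function $\gamma|_{\pl\Omega}$, leaves precisely the normal derivative $\pl_\nu\gamma(x_0)$. The analogous, and simpler, computation applied to the linear flux $\sigma\,\pl_\nu v_N$ recovers $\pl_\nu\sigma(x_0)$ in the classical way.

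The main obstacle, I expect, is making this two-parameter asymptotics ($\epsilon$ small or large, $N\to\infty$) rigorous simultaneously. One must estimate the $\epsilon$-remainder of the nonlinear solution uniformly in $N$, even though $\|\nabla v_N\|$ grows like $N$, and independently bound the WKB defect $\nabla\cdot(\sigma\nabla v_N)$, so that neither error contaminates the $N^{-1}$ coefficient from which the normal derivative is read. This demands energy estimates for \eqref{gov_eqn} that are uniform with respect to the high-frequency data, together with a careful accounting showing that the anisotropic hypothesis $|\alpha'|+2\alpha_n\le 2$ supplies exactly the regularity consumed by the Taylor remainder of $\gamma$ inside the boundary layer.
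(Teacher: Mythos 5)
Your proposal shares the paper's overall architecture: boundary normal coordinates at $x_0$, the parabolic concentration scaling (tangential scale $N^{-1/2}$, normal scale $N^{-1}$), extraction of the $(p-1)$-homogeneous part of the flux by $\varepsilon$-linearization, and then a high-frequency limit. Note first that your self-identified ``main obstacle'' (uniformity of the $\varepsilon$-expansion in $N$) is not an obstacle at all: the limits are taken sequentially, exactly as in Sections \ref{sec_2}--\ref{sec_3} of the paper --- for each \emph{fixed} concentrating boundary datum the limit $\varepsilon\to 0$ identifies the functional $I(u_0,w)$ exactly, and only afterwards does one let $N\to\infty$. The genuine gap lies elsewhere. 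You propose to work directly with the $p$-homogeneous quantity $\int_\Omega\gamma\,|\nabla v_N|^p\,\psi\,dx$, whereas the paper first \emph{polarizes} (the functionals $J_1$, $J_2$, $K$ of Section \ref{sec_3}), producing $K(u_1,u_2,u_3)=\int_\Omega \beta_{u_1}\nabla u_2\cdot\nabla u_3\,dx$ with the weight $\beta_{u_1}=\gamma|\nabla u_1|^{p-2}$ attached to a \emph{fixed} smooth solution $u_1$ with nonvanishing gradient; the concentrating solutions then enter only quadratically, so the classical linear-theory asymptotics of \cite{nt,ntt,kang2002boundary} apply verbatim. In your version one faces $|\nabla v_N|^p=\bigl(cN^2\eta^2+N|\nabla_{x'}\eta|^2\bigr)^{p/2}e^{-pNx_n}$ with non-integer $p$: the binomial expansion of the amplitude fails near the edge of the support of $\eta$, where $\eta$ degenerates but $\nabla_{x'}\eta$ does not, and the $N^{-1}$ coefficient you want to read off contains --- besides the tangential Hessian and $\partial_{x_n}\gamma(x_0)$ --- a cutoff-gradient term (the analogue of $\tfrac{3}{2}h(x_0)\int_{\R^{n-1}}|\nabla_{x'}\eta|^2\,dx'$ in Proposition \ref{prop_1}) which your expansion omits, and whose coefficient involves integrals such as $\int \eta^{p-2}|\nabla_{x'}\eta|^2\,dx'$, precisely where the expansion is delicate. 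Relatedly, controlling the exact-solution corrections (the $s_N$, $r_N$ of \eqref{sol}) against a $p$-homogeneous functional requires $L^p$-type bounds on $\nabla s_N$, $\nabla r_N$, not the $L^2$ bounds available from the cited linear theory; this is additional work your proposal does not address.

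The second essential difference is the mechanism isolating $\partial_{x_n}\gamma(x_0)$. You read off the $N^{-1}$ coefficient of a single-frequency expansion and then subtract the tangential Hessian, computed by differentiating the boundary values of $\gamma$ recovered by Theorem \ref{thm_1}; this requires knowing $\gamma|_{\partial\Omega}$ on a whole boundary neighbourhood of $x_0$ and differentiating a pointwise-reconstructed function --- an input the paper deliberately avoids (it emphasizes that, unlike \cite{nac,ntt}, no values of $\gamma$ near $x_0$ are used). The paper instead evaluates the known functional \eqref{k} at \emph{two} frequencies, $u_2$ built on $\Psi_N$ (frequency $N/2$) and $u_3$ built on $\Phi_N$ (frequency $N$) as in \eqref{defn_phi_psi}, and forms the combination $2K(u_1,u_2,\overline{u_2})-K(u_1,u_3,\overline{u_3})$. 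After scaling, the normal profile becomes $e^{-y_n}-2e^{-2y_n}$, which has vanishing integral but first moment $\tfrac{1}{2}$; consequently the leading term \emph{and all tangential-derivative terms} are annihilated up to $\mathcal{O}(e^{-\sqrt{N}/2})$, and only $\tfrac{1}{2}\partial_{x_n}h(x_0)$ plus a known cutoff term survive. This two-frequency cancellation is the key idea of the paper's proof of Theorem \ref{thm_2}, and it is absent from your argument; without it, your route can in principle be completed, but only after supplying the omitted cutoff term, the non-integer-power expansion, the $L^p$ remainder estimates, and the extra boundary-neighbourhood data, none of which your proposal establishes.
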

			
The paper is organized as follows: we state the well-posedness results of \eqref{gov_eqn} along with corresponding $\epsilon-$expansion of the solution in section \ref{sec_2} which further includes the epsilon expansion of Dirichlet to Neumann map. In section \ref{sec_3}, we provide a detailed proof for the reconstruction of the coefficient at the boundary of the domain (cf. Theorem \ref{thm_1}). We construct a few approximate solutions using the localized Dirichlet data to extract the normal derivative of $\gamma$ and obtain a reconstruction formula in Section \ref{sec_4}.
			
\section{$\varepsilon$-expansion of the solution for perturbed $p$-Laplacian problem}\label{sec_2}
In this section, we state the existence of the solution and corresponding energy estimates for the boundary value problem (BVP) \eqref{gov_eqn}. Based on the range of $p$, we have weak solution and strong solution of the BVP \eqref{gov_eqn} corresponding to sufficiently small boundary data. We remark that such boundary data are considered since we require the resulting estimates while analyzing the boundary determination problem. More precisely, we shall use the weak solution and strong solution of BVP \ref{gov_eqn} while reconstructing the scalar coefficient $\gamma$ at the boundary.
The following Proposition \ref{prop_0} can be proved by defining suitable energy functionals and subsequently, considering the minimizing sequence (for details, we refer to \cite[Section 2]{cm}).
	
\begin{proposition}[Weak solutions]\label{prop_0}
	\begin{enumerate}[(i)]
	\item Assume that $1 < p < 2$ and $ f \in W^{1/2,2} (\partial \Omega)$. Then the BVP \eqref{gov_eqn} has a unique solution $$u \in W^{1/2,2} (\Omega) \cap W^{1,p} (\Omega), $$ and it satisfies the following estimate
		\begin{equation*}
			\|u\|_{W^{1,2} (\Omega)}  \leq  C  \|f\|_{W^{1/2,2} (\partial \Omega)},
		\end{equation*}
	where positive constant $C$ does not depend on the boundary data $f$.
\item Assume that $2 < p < \infty$ and $ f \in W^{1-{1/p},p} (\partial \Omega) $. Then the BVP \eqref{gov_eqn} has a unique solution $$u \in W^{1/2,2} (\Omega) \cap W^{1,p} (\Omega), $$ along with the stability estimate
	\begin{equation*}
		\|u\|_{W^{1,p} (\Omega)}  \leq  C  \| f \|_{W^{1-{1/p},p} (\partial \Omega)},
	\end{equation*}
	where $C=C(\Omega)>0$.
	\end{enumerate}
\end{proposition}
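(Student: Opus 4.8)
The plan is to obtain the solution by the direct method in the calculus of variations, exploiting that the equation in \eqref{gov_eqn} is the Euler--Lagrange equation of the strictly convex energy
\begin{equation*}
	J(v)=\int_\Omega\left(\frac{1}{2}\sigma(x)|\nabla v|^2+\frac{1}{p}\gamma(x)|\nabla v|^p\right)dx.
\end{equation*}
First I would fix the admissible class $\mathcal A_f=\{v\in W^{1,2}(\Omega)\cap W^{1,p}(\Omega):v|_{\partial\Omega}=f\}$. This set is nonempty: by the trace/extension theorem there is an extension $F$ of $f$ lying in $W^{1,2}(\Omega)$ when $p\in(1,2)$ (resp.\ in $W^{1,p}(\Omega)$ when $p\in(2,\infty)$), and the complementary membership is automatic from the Sobolev embedding $W^{1,2}(\Omega)\hookrightarrow W^{1,p}(\Omega)$ for $p<2$ (resp.\ $W^{1,p}(\Omega)\hookrightarrow W^{1,2}(\Omega)$ for $p>2$) on the bounded domain $\Omega$, with $\|F\|$ controlled by the appropriate boundary norm of $f$. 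On $\mathcal A_f$ the functional $J$ is finite.

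Next I would run the three standard ingredients of the direct method. For coercivity, the lower bounds $\sigma>\lambda$ and $\gamma>m_1$ show that the dominant term already controls the target norm: for $p<2$ one has $J(v)\ge\tfrac{\lambda}{2}\|\nabla v\|_{L^2}^2$, while for $p>2$ one has $J(v)\ge\tfrac{m_1}{p}\|\nabla v\|_{L^p}^p$; combined with the Poincar\'e inequality applied to $v-F$ this yields coercivity over $\mathcal A_f$ in $W^{1,2}$ (resp.\ $W^{1,p}$). For weak lower semicontinuity, I would invoke the standard result that integral functionals with integrands convex in the gradient are sequentially weakly lower semicontinuous; here both integrands are convex in $\nabla v$ (the first strictly), and a weakly convergent minimizing sequence in the target space converges weakly in both $L^2$ and $L^p$ by the bounded-domain embeddings, so each term is weakly lower semicontinuous. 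A minimizing sequence is bounded by coercivity, hence has a weakly convergent subsequence whose limit $u\in\mathcal A_f$ minimizes $J$. Computing the first variation of $J$ at $u$ against test functions in $W^{1,2}_0(\Omega)\cap W^{1,p}_0(\Omega)$ shows that $u$ is the desired weak solution of \eqref{gov_eqn}. Uniqueness I would get either from strict convexity of $J$, or equivalently by testing the difference of two solutions against itself and using the strict monotonicity of $\xi\mapsto\sigma\xi+\gamma|\xi|^{p-2}\xi$, whose $\sigma$-part gives $\ge\lambda\|\nabla(u_1-u_2)\|_{L^2}^2$ and forces $u_1=u_2$.

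It remains to establish the stated estimates, which is where the interaction of the two growth rates must be handled with care. Since $u$ minimizes $J$, I would compare $J(u)\le J(F)$ and use the coefficient bounds to get
\begin{equation*}
	\frac{\lambda}{2}\|\nabla u\|_{L^2}^2+\frac{m_1}{p}\|\nabla u\|_{L^p}^p\le\frac{1}{2\lambda}\|\nabla F\|_{L^2}^2+\frac{1}{pm_1}\|\nabla F\|_{L^p}^p.
\end{equation*}
For $p<2$ the left side controls $\|\nabla u\|_{L^2}$, and the subdominant $L^p$ term on the right is absorbed by $\|\nabla F\|_{L^p}\le C\|\nabla F\|_{L^2}$; for $p>2$ the left side controls $\|\nabla u\|_{L^p}$, and now the subdominant $L^2$ term on the right is absorbed by $\|\nabla F\|_{L^2}\le C\|\nabla F\|_{L^p}$. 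In each case a final application of Poincar\'e (using $u-F\in W^{1,2}_0$, resp.\ $W^{1,p}_0$) upgrades the gradient bound to the full norm, and the extension estimate $\|F\|\le C\|f\|$ on the boundary closes the argument. The membership $u\in W^{1/2,2}(\Omega)$ follows for free from $W^{1,2}(\Omega)\hookrightarrow W^{1/2,2}(\Omega)$.

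The main obstacle is the mixed-growth structure: the functional is genuinely two-scale, so the ``natural'' energy space and the coercivity norm change with $p$, and in each regime the estimate requires absorbing the subdominant term by the bounded-domain Sobolev embedding rather than by the functional itself. Keeping the two cases cleanly separated---choosing $W^{1,2}$ for $1<p<2$ and $W^{1,p}$ for $2<p<\infty$, and verifying that admissibility, coercivity, weak lower semicontinuity, and the final estimate are all consistent in the chosen space---is the part demanding the most bookkeeping.
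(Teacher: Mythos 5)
Your overall strategy---minimizing the strictly convex energy $J$ over the trace-constrained class, with coercivity, weak lower semicontinuity, and strict convexity (or monotonicity) for uniqueness---is exactly the route the paper takes, since it defers the proof to the energy-functional/minimizing-sequence argument of \cite[Section 2]{cm}; your existence and uniqueness arguments are correct, including the use of the bounded-domain embeddings $W^{1,2}(\Omega)\hookrightarrow W^{1,p}(\Omega)$ for $p<2$ and $W^{1,p}(\Omega)\hookrightarrow W^{1,2}(\Omega)$ for $p>2$ to make the admissible class and the trace extension consistent.

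The gap is in the final step, where you pass from
\begin{equation*}
\frac{\lambda}{2}\|\nabla u\|_{L^2}^2+\frac{m_1}{p}\|\nabla u\|_{L^p}^p\le\frac{1}{2\lambda}\|\nabla F\|_{L^2}^2+\frac{1}{pm_1}\|\nabla F\|_{L^p}^p
\end{equation*}
to the stated linear estimates. The embedding $\|\nabla F\|_{L^p}\le C\|\nabla F\|_{L^2}$ (for $p<2$) replaces $\|\nabla F\|_{L^p}^p$ by $C\|\nabla F\|_{L^2}^p$, but it cannot change the exponent $p$ into $2$: what your comparison actually yields is
\begin{equation*}
\|\nabla u\|_{L^2}^2\le C\big(\|\nabla F\|_{L^2}^2+\|\nabla F\|_{L^2}^p\big),
\qquad\text{i.e.}\qquad
\|\nabla u\|_{L^2}\le C\big(\|f\|_{W^{1/2,2}(\pl\Omega)}+\|f\|_{W^{1/2,2}(\pl\Omega)}^{p/2}\big).
\end{equation*}
Since $p/2<1$, for small $\|f\|$ the second term dominates and is strictly weaker than the claimed bound $C\|f\|$, so an estimate linear in $\|f\|$ with $C$ independent of $f$ does not follow. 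The identical mismatch occurs for $p>2$, where the method gives $\|\nabla u\|_{L^p}\le C\big(\|f\|_{W^{1-1/p,p}(\pl\Omega)}+\|f\|_{W^{1-1/p,p}(\pl\Omega)}^{2/p}\big)$, again not linear for small data. Testing the weak formulation with $u-F$ and applying Young's inequality hits the same obstruction, so no rearrangement of exponents or smarter comparison function repairs it: the functional mixes two homogeneities, and pure energy comparison can only produce the two-term bound. Note that this two-term bound is all the paper needs downstream: for $p\in(1,2)$ the proposition is applied to the large data $\eps^{-1}f$, and $\|g\|+\|g\|^{p/2}\le 2\|g\|$ once $\|g\|\ge 1$, so the estimate \eqref{sol_p<1} survives; for $p>2$ the $\eps$-expansion instead invokes the strong solution of Proposition \ref{prop_2}, whose linear small-data estimate comes from a contraction-mapping argument around the linear equation, not from energy comparison. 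But as a proof of the proposition as literally stated, your argument establishes only the weaker two-term estimate; you should either state the estimate with both terms, or supply a separate perturbative argument in the small-data regime.
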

			
Proposition \ref{prop_0} can be further improved in case of $p>2$. In fact, there exists a strong solution for the BVP \eqref{gov_eqn}. 
By defining an appropriate operator and using the Sobolev embedding results and finally, demonstrating the fixed point of contraction mapping, the existence of the strong solution can be established. For detailed proof of the following Proposition \ref{prop_2}, we refer to \cite[Theorem 2.2]{cm}.
			
\begin{proposition}[Strong solution] \label{prop_2}
Assume that $p>2$ and $m\in\R$ such that $m>n$. Furthermore, let $f \in W^{2-{1/m},m}(\partial \Omega)$ and there holds
				\begin{align*}
				\|f\|_{W^{2-{1/m},m}(\partial \Omega)} \leq M,
				\end{align*}
				for some $M>0$.
				Then there exists a unique solution for BVP \eqref{gov_eqn}
				$$u \in W^{2,m} (\Omega).$$
				Moreover, the solution $u$ satisfies  
				\begin{equation}\label{temp_0}
				\|u\|_{W^{2,m} (\Omega)}  \leq  C  \| f \|_{W^{2-{1/m},m}(\partial \Omega)},
				\end{equation}
where constant $C >0$ is depending on the choice of $p$, domain $\Omega$ and coefficients $\sigma$, and $\gamma$.
\end{proposition}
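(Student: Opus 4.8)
The plan is to recast the quasilinear problem as a fixed-point equation for the smooth linear principal part, and then invoke the Banach fixed point theorem, exploiting that for $p>2$ the nonlinear term is of higher order and hence small for small boundary data. First I would set up the linear theory: since $\sigma\in C^{\infty}(\overline{\Omega})$ is uniformly elliptic, the operator $Lu:=\nabla\cdot(\sigma\nabla u)$ enjoys Calder\'on--Zygmund / Agmon--Douglis--Nirenberg $W^{2,m}$ regularity, so that for each $g\in L^{m}(\Omega)$ and the given $f\in W^{2-1/m,m}(\partial\Omega)$ the linear Dirichlet problem $Lu=g$ in $\Omega$, $u|_{\partial\Omega}=f$, has a unique solution $u\in W^{2,m}(\Omega)$ with $\|u\|_{W^{2,m}(\Omega)}\le C_0\bigl(\|g\|_{L^{m}(\Omega)}+\|f\|_{W^{2-1/m,m}(\partial\Omega)}\bigr)$. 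I would then define the solution operator $T$ by letting $Tv$ solve $L(Tv)=-\nabla\cdot(\gamma|\nabla v|^{p-2}\nabla v)$ with $Tv|_{\partial\Omega}=f$, so that a fixed point of $T$ is exactly a strong solution of \eqref{gov_eqn}.

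The central analytic input is that the nonlinear right-hand side lands in $L^{m}$. Here the hypothesis $m>n$ enters through the Sobolev embedding $W^{2,m}(\Omega)\hookrightarrow C^{1,\alpha}(\overline{\Omega})$, which gives $\nabla v\in L^{\infty}(\Omega)$ and hence $|\nabla v|^{p-2}\in L^{\infty}(\Omega)$. Writing out $\nabla\cdot(\gamma|\nabla v|^{p-2}\nabla v)$ and differentiating the vector field $F(\xi)=|\xi|^{p-2}\xi$, one checks that its second-order part has the schematic form $|\nabla v|^{p-2}D^{2}v$ (the apparently singular factor $|\nabla v|^{p-4}\,\nabla v\otimes\nabla v$ is in fact bounded by $|\nabla v|^{p-2}$), so that the divergence is controlled in $L^{m}(\Omega)$ by $C\,\|v\|_{W^{2,m}(\Omega)}^{\,p-1}$ together with the regularity of $\gamma$. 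Combined with the linear estimate this yields $\|Tv\|_{W^{2,m}(\Omega)}\le C_0\bigl(C\|v\|_{W^{2,m}(\Omega)}^{\,p-1}+M\bigr)$.

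With these bounds in hand I would run the contraction argument on the closed ball $\mathcal{B}=\{v\in W^{2,m}(\Omega): v|_{\partial\Omega}=f,\ \|v\|_{W^{2,m}(\Omega)}\le R\}$ with $R\sim C_0 M$. The self-mapping property follows because, since $p-1>1$, the estimate above gives $C_0(CR^{p-1}+M)\le R$ once $M$ is small enough. For the contraction I would use the elementary inequality $\bigl||\xi|^{p-2}\xi-|\eta|^{p-2}\eta\bigr|\le C(|\xi|+|\eta|)^{p-2}|\xi-\eta|$, valid for $p\ge2$, applied to $\xi=\nabla v_1$, $\eta=\nabla v_2$, to obtain $\|Tv_1-Tv_2\|_{W^{2,m}(\Omega)}\le C_0 C R^{p-2}\|v_1-v_2\|_{W^{2,m}(\Omega)}$; since $p>2$, the factor $R^{p-2}$ is small and $T$ is a contraction for $M$ sufficiently small (consistent with the smallness of data noted in this section). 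The Banach fixed point theorem then produces the unique $u\in\mathcal{B}$, and the resulting bound $\|u\|_{W^{2,m}(\Omega)}\le R\le C\|f\|_{W^{2-1/m,m}(\partial\Omega)}$ is precisely \eqref{temp_0}; uniqueness among all $W^{2,m}$ solutions follows because any strong solution is in particular a weak solution, to which the uniqueness statement in Proposition \ref{prop_0} applies.

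The main obstacle I anticipate is the rigorous passage of the divergence onto the nonlinear flux at the level of second derivatives, i.e. verifying that $\nabla\cdot(\gamma|\nabla v|^{p-2}\nabla v)\in L^{m}(\Omega)$ with the stated quantitative bounds, both for the self-map and, more delicately, for the difference $Tv_1-Tv_2$. The sensitive point is the behaviour of $F(\xi)=|\xi|^{p-2}\xi$ near $\xi=0$ when $2<p<3$, where $F$ is only H\"older and its derivative $DF(\xi)$ is merely bounded rather than Lipschitz; controlling the contraction estimate uniformly across the set where $\nabla v$ vanishes relies on the algebraic cancellations noted above, and it is exactly here that the smallness of the data $M$ must be used decisively to close the argument.
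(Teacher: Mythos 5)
Your overall architecture --- Agmon--Douglis--Nirenberg $W^{2,m}$ theory for $\nabla\cdot(\sigma\nabla\,\cdot\,)$, the solution operator $T$, the embedding $W^{2,m}(\Omega)\hookrightarrow C^{1,\alpha}(\overline{\Omega})$ for $m>n$, and a small-data fixed point --- is exactly the route the paper intends: it gives only this sketch and defers the details to \cite[Theorem 2.2]{cm}. However, there is a genuine gap in your contraction step. The inequality $\bigl||\xi|^{p-2}\xi-|\eta|^{p-2}\eta\bigr|\le C(|\xi|+|\eta|)^{p-2}|\xi-\eta|$ controls only the $L^{m}$ norm of the flux difference $\gamma\bigl(F(\nabla v_1)-F(\nabla v_2)\bigr)$, where $F(\xi)=|\xi|^{p-2}\xi$; since $Tv_1-Tv_2$ solves a divergence-form problem with this flux as the source, what this buys is $\|Tv_1-Tv_2\|_{W^{1,m}(\Omega)}\le CR^{p-2}\|v_1-v_2\|_{W^{1,m}(\Omega)}$ --- one derivative short of the claimed bound $\|Tv_1-Tv_2\|_{W^{2,m}(\Omega)}\le C_0CR^{p-2}\|v_1-v_2\|_{W^{2,m}(\Omega)}$. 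To estimate the $W^{2,m}$ norm of the difference you must place $\nabla\cdot\bigl(\gamma[F(\nabla v_1)-F(\nabla v_2)]\bigr)$ in $L^{m}$, and this produces the term $[DF(\nabla v_1)-DF(\nabla v_2)]:D^{2}v_2$. For $2<p<3$ the matrix field $DF$ is only $(p-2)$-H\"older near $\xi=0$ (indeed $D^{2}F(\xi)\sim|\xi|^{p-3}$ blows up there), so the best available bound is of the form $CR\,\|v_1-v_2\|_{W^{2,m}(\Omega)}^{p-2}$: a H\"older, not Lipschitz, modulus. You flag precisely this in your final paragraph, but the proposed remedy (``smallness of $M$ used decisively'') cannot close it: if $d(Tv_1,Tv_2)\le CR\,d(v_1,v_2)^{p-2}$ with $p-2<1$, the Picard iterates only stabilize at the positive scale $(CR)^{1/(3-p)}$, so no smallness of $R$ makes $T$ a contraction in the $W^{2,m}$ metric, and Banach's theorem does not apply as you invoke it.

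The repair is standard and preserves your architecture: run the contraction in the \emph{weaker} $W^{1,m}$ metric on the closed ball $\mathcal{B}=\{v\in W^{2,m}(\Omega):\ v|_{\partial\Omega}=f,\ \|v\|_{W^{2,m}(\Omega)}\le R\}$. This ball is complete under the $W^{1,m}$ distance (reflexivity of $W^{2,m}$ plus weak lower semicontinuity of the norm), your self-map estimate keeps $T(\mathcal{B})\subseteq\mathcal{B}$ for $R\sim C_0M$ with $M$ small, and in the weaker norm your pointwise inequality gives exactly the Lipschitz bound $\|Tv_1-Tv_2\|_{W^{1,m}(\Omega)}\le CR^{p-2}\|v_1-v_2\|_{W^{1,m}(\Omega)}$, whose constant is small because $p>2$. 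The fixed point then lies in $\mathcal{B}\subset W^{2,m}(\Omega)$, yielding existence together with \eqref{temp_0}, and uniqueness in the full class follows from Proposition \ref{prop_0} exactly as you argue. One further point should be made explicit: already for the self-map estimate you need $\nabla\gamma\cdot F(\nabla v)\in L^{m}(\Omega)$, i.e.\ $\gamma$ at least Lipschitz (as in \cite{cm}); under the paper's standing assumption $\gamma\in L^{\infty}(\Omega)$ the non-divergence-form right-hand side is not even defined, so this regularity must be stated as a hypothesis rather than absorbed into ``the regularity of $\gamma$.''
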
 
						
Based on the well-posedness results, described in Proposition \ref{prop_0} and Proposition \ref{prop_2}, we define the nonlinear DN-map in a weak sense as follows:
\begin{align}\label{dn1}
\langle \Lambda_{\sigma,\gamma} (f) ,w|_{\partial\Omega}\rangle = \int_{\Omega} (\sigma(x) + \gamma(x) |\nabla u |^{p-2} )\nabla u\cdot\overline{\nabla w}~dx,
\end{align}
where $w\in W^{2,m} (\Omega) \cap W^{1,2} (\Omega) $. Afterwards, we look into the $\eps$-expansion of the solution for the BVP \eqref{gov_eqn}. Since the expansion depends on the range of $p$, we carry out the analysis via two cases.
\subsection{Case-1: $p>2$}
We look for $\eps$-expansion of the solution of \eqref{gov_eqn} for $p>2$ based on the existence of strong solution mentioned in Proposition \ref{prop_2}. 
			
Let us consider the following BVP
\begin{align}\label{gov_eqn_1}
			\begin{cases}
			\nabla\cdot (\sigma(x)\nabla u_{\eps}(x) + \gamma(x)|\nabla u_{\eps}(x)|^{p-2}\nabla u_{\eps}(x)) = 0, &\quad x\in~\Omega, \\
			u_{\eps}(x)=\eps f(x), &\quad x\in\pl\Omega.\\
			\end{cases}
\end{align}
From the stability estimate \eqref{temp_0}, we obtain
			\begin{align}
			\|\eps^{-1} u_{\eps}\|_{W^{2,m}(\Omega)}\leq C \|f\|_{W^{2-1/m,m}(\partial\Omega)}.
			\end{align}
			As a consequence, we consider the following $\eps$-expansion
			\begin{align}\label{tem_2}
			u_{\eps}=\eps u_0 + \eps^{p-1} v,
			\end{align}
where $u_0 \in W^{2,m}(\Omega)$ and $v \in W^{2,m}(\Omega)$ will be determined by substituting \eqref{tem_2} in the BVP \eqref{gov_eqn_1}.
We obtain that $u_0$ satisfies the following BVP
			\begin{align}\label{eq1}
			\begin{cases}
			\nabla\cdot (\sigma(x)\nabla u_0(x))=0, & \quad x\in\Omega, \\
			u_0(x) = f(x), & \quad x\in\pl\Omega,
			\end{cases}
			\end{align}
			and eventually, $v$ satisfies the BVP
			\begin{align}\label{eq11}
			\begin{cases}
			\nabla\cdot (\sigma(x)\nabla v) + \eps^{1-p}\nabla\cdot (\gamma(x)|\nabla u_{\eps}|^{p-2}\nabla u_{\eps}) = 0, &\quad x\in~\Omega, \\
			v(x)=0 &\quad x\in\pl\Omega.\\
			\end{cases}
			\end{align}

\subsection{Case-2: $1<p<2$.}
For $p\in (1,2)$, $\eps$-expansion of the solution of BVP \eqref{gov_eqn} is a consequence of existence of a weak solution of \eqref{gov_eqn}. Since $p\in(1,2)$, then we get a sublinear term instead of a nonlinear term. Therefore, for arbitrary small $\eps>0$, we will take the Dirichlet boundary data as $\eps^{-1}f$ in \eqref{gov_eqn}.
			
Let us consider
\begin{align}\label{gov_eqn_2}
			\begin{cases}
			\nabla\cdot (\sigma(x)\nabla u_\eps + \gamma(x)|\nabla u_\eps|^{p-2}\nabla u_\eps) = 0,  &\quad x\in~\Omega, \\
			u_\eps(x)=\eps^{-1}f(x), &\quad x\in\pl\Omega.\\
			\end{cases}
			\end{align}
			The well-posedness of above BVP \eqref{gov_eqn_2} is assured by the Proposition \ref{prop_0} along with the stability estimate 
			\begin{equation}\label{sol_p<1}
			\|\eps u_\eps\|_{W^{1,2} (\Omega)}  \leq  C  \|f\|_{W^{1/2,2} (\partial \Omega)}.
			\end{equation}
			By taking into account the estimate \eqref{sol_p<1}, we consider the following expansion:
			\begin{align}\label{tem_21}
			u_{\eps}=\eps^{-1} u_0 + \eps^{1-p} v,
			\end{align}
			where $u_0 \in W^{1,2}(\Omega)$ and $v \in W^{1,2}(\Omega)$ will be determined by substituting \eqref{tem_21} in the BVP \eqref{gov_eqn_2}.
			It is straightforward to observe that $u_0$ satisfies the following BVP
			\begin{align}\label{eq2}
			\begin{cases}
			\nabla\cdot (\sigma\nabla u_0)=0,&\quad x\in\Omega, \\
			u_0(x) = f(x), & \quad x\in\pl\Omega,
			\end{cases}
			\end{align}
			and $v$ satisfies the BVP
			\begin{align}\label{eq12}
			\begin{cases}
			\nabla\cdot (\sigma(x)\nabla v) + \eps^{p-1}\nabla\cdot (\gamma(x)|\nabla u_{\eps}|^{p-2}\nabla u_{\eps}) = 0, &\quad x\in~\Omega, \\
			v(x)=0, &\quad x\in\pl\Omega.\\
			\end{cases}
			\end{align}
			
\subsection{$\eps$-expansion of DN-map}		
In order to recover the parameter $\gamma$ at the boundary, we require the $\eps$-expansion of the DN-map described in \eqref{dn1}. We carry out the following analysis motivated by C{\^a}rstea et al. \cite{cm} in which the reconstruction  of $\gamma$ in the interior of domain in $\mathbb{R}^{n}$ $(n\geq 3)$ is described. 
The following estimate will be instrumental for our analysis: for $p>2$,
			\begin{align}\label{esti_01}
			\big||\eta|^{p-2}\eta-|\beta|^{p-2}\beta \big| \leq C (|\eta|+|\beta|)^{p-2} |\eta-\beta|,
			\end{align}
where $C>0$ and $\eta,\beta\in \mathbb{C}^n$ (for detailed proof, one can look into  \cite{salo2012inverse} ).		
Let us consider $w \in W^{2,m} (\Omega)\cap W^{1,2} (\Omega) $. Using the estimate \eqref{esti_01}, we obtain
\begin{equation}\label{est11}
 \begin{split}
	\Big| \int_{\Omega} & \gamma(x) (|\nabla u_\eps |^{p-2} \nabla u_\eps-\eps^{p-1}|\nabla u_0|^{p-2}\nabla u_0)\cdot\overline{\nabla w}~dx \Big| \\
	& \leq C \eps^{2p-3} \int_{\Omega} (|\eps^{-1}\nabla u_\eps|^{p-2}+|\nabla u_0|^{p-2})|\nabla v ||\nabla w|~dx = \mathcal{O}(\eps^{2p-3}).
 \end{split}
\end{equation}
With the help of expansion \eqref{tem_2} and above estimate \eqref{est11}, we get
			\begin{align*}
			\langle \Lambda_{\sigma,\gamma} (\eps f) ,w\big|_{\partial\Omega}\rangle 
			& = \int_{\Omega} (\sigma(x) + \gamma(x) |\nabla u_\eps |^{p-2} )\nabla u_\eps \cdot\overline{\nabla w}~dx \\
			& = \eps \int_{\Omega} \sigma(x)\nabla u_0 \cdot\overline{\nabla w}~dx ~+~ \eps^{p-1} \int_{\Omega} \sigma(x)\nabla v \cdot\overline{\nabla w}~dx \\
			& \qquad +\eps^{p-1}\int_{\Omega} \gamma(x) |\nabla u_0 |^{p-2} \nabla u_0 \cdot\overline{\nabla w}~dx~+~ \mathcal{O}(\eps^{2p-3}).
			\end{align*}
Subsequently, by considering the limit $\eps$ tending to zero, we have
			\begin{equation}\label{dn11}
			\begin{split}
			\lim_{\eps \rightarrow 0} \frac{1}{\eps } \langle \Lambda_{\sigma,\gamma} (\eps f) ,w\big|_{\partial\Omega}\rangle
			&= \int_{\Omega} \sigma(x)\nabla u_0 \cdot\overline{\nabla w}~dx\\
			& =\langle \Lambda_{\sigma} (f) ,w\big|_{\partial\Omega}\rangle,
			\end{split}\end{equation}
where $\Lambda_{\sigma} (f)$ is the DN-map associated to the Dirichlet BVP \eqref{eq1}.

For $p\in (1,2)$, in order to recover the parameter $\gamma$ at the boundary, we need to consider the $\eps$-expansion of the DN-map using $u_{\eps}$ described in \eqref{tem_21}.
As in the earlier case, we start with $w \in W^{1,2} (\Omega)$. Using the estimate \eqref{esti_01}, we obtain
			\begin{equation}\label{est12}
			\begin{split}
			\Big| \int_{\Omega} & \gamma(x) (|\nabla u_\eps |^{p-2} \nabla u_\eps-\eps^{1-p}|\nabla u_0|^{p-2}\nabla u_0)\cdot\overline{\nabla w}~dx \Big| \\
			& \leq C \eps^{3-2p} \int_{\Omega} (|\eps\nabla u_\eps|^{p-2}+|\nabla u_0|^{p-2})|\nabla v ||\nabla w|~dx = \mathcal{O}(\eps^{3-2p}).
			\end{split}
			\end{equation}
With the help of expansion \eqref{tem_21} and above estimate \eqref{est12}, we get
			\begin{align*}
			\langle \Lambda_{\sigma,\gamma} (\eps^{-1} f) ,w\big|_{\partial\Omega}\rangle 
			& = \int_{\Omega} (\sigma(x) + \gamma(x) |\nabla u_\eps |^{p-2} )\nabla u_\eps \cdot\overline{\nabla w}~dx \\
			& = \eps^{-1} \int_{\Omega} \sigma(x)\nabla u_0 \cdot\overline{\nabla w}~dx ~+~ \eps^{1-p} \int_{\Omega} \sigma(x)\nabla v \cdot\overline{\nabla w}~dx \\
			& \qquad +\eps^{1-p}\int_{\Omega} \gamma(x) |\nabla u_0 |^{p-2} \nabla u_0 \cdot\overline{\nabla w}~dx~+~ \mathcal{O}(\eps^{3-2p}).
			\end{align*}
By considering the DN-map $\Lambda_{\sigma} (f)$ associated with the Dirichlet BVP \eqref{eq2}, we have
			\begin{equation}\label{dn12}
			\begin{split}
			\lim_{\eps \rightarrow 0} \eps \langle \Lambda_{\sigma,\gamma} (\eps^{-1} f) ,w\big|_{\partial\Omega}\rangle
			&= \int_{\Omega} \sigma(x)\nabla u_0 \cdot\overline{\nabla w}~dx\\
			& =\langle \Lambda_{\sigma} (f) ,w\big|_{\partial\Omega}\rangle.
			\end{split}\end{equation} 
Hence we conclude that the DN-map $\Lambda_{\sigma} (f)$ is known from the knowledge of given DN-map $\Lambda_{\sigma,\gamma}$, for all values of $p$ under consideration.
			
\section{Proof for the Theorem \ref{thm_1} }\label{sec_3}
In this section, we provide the reconstruction procedure of the coefficients $\sigma$, and $\gamma$.
In order to prove the Theorem \ref{thm_1}, we shall consider two separate cases based on the range of $p$.
\subsection{Reconstruction of $\sigma $} 
For the reconstruction of $\sigma$ from the knowledge of the DN-map $\Lambda_{\sigma} (f)$ associated to BVP \eqref{eq1}, one can refer to \cite[Theorem 5.1]{nac}.  
\subsection{Reconstruction of $\gamma$}
Using the $\eps$-expansion of the DN-map, we will give a procedure for the recovery of the parameter $\gamma$ at the boundary $\partial\Omega$. For this, our idea is to find a suitable integral identity which is known from the knowledge of the DN-map. By the asymptotic expansion of the solution along with appropriate choice of sequence of solutions of conductivity equation, the desired reconstruction of $\gamma$ at the boundary of $\Omega$ is carried out.
			
\subsubsection{Case: $p>2$}\label{case11}
 Let us consider $w \in {W}^{2,m}(\Omega)$ such that it satisfies
\begin{align}\label{temp_w_eqn}
\nabla\cdot (\sigma \nabla w) =0 \quad \text{ on } \Omega.
\end{align}
By multiplying equation \eqref{gov_eqn_1} with $w$ and integrating the resulting equation over $\Omega$, we get
			\begin{equation*}
			\int_{\Omega} w(\nabla\cdot (\sigma\nabla u_{\eps} +\gamma|\nabla u_{\eps}|^{p-2}\nabla u_{\eps})) ~~dx = 0
			\end{equation*}
and it further yields the following equation by using the integration by parts
			\begin{equation*}
			\int_{\Omega} \nabla w \cdot (\sigma\nabla u_{\eps} +\gamma|\nabla u_{\eps}|^{p-2}\nabla u_{\eps}) ~dx = \int_{\partial\Omega} w  (\sigma + \gamma|\nabla u_{\eps}|^{p-2})\partial_\nu u_{\eps} ~dS.
			\end{equation*}
Using \eqref{temp_w_eqn}, the above expression becomes
\begin{equation}\label{int1}
\int_{\Omega} \nabla w \cdot (\gamma|\nabla u_{\eps}|^{p-2}\nabla u_{\eps}) ~dx = \int_{\partial\Omega} w  (\sigma + \gamma|\nabla u_{\eps}|^{p-2})\partial_\nu u_{\eps} ~dS
-\varepsilon\langle\Lambda_{\sigma}(f), w|_{\pl\Omega}\rangle.
\end{equation}
We observe that the right hand side terms of \eqref{int1} are known by the prescribed DN-maps $\Lambda_{\sigma,\gamma}$ and $\Lambda_{\sigma}$ respectively. As a result, the left hand side of \eqref{int1} is known to us. Let us define
			\begin{align*}
			K^{*} &:= \int_{\Omega} \nabla w \cdot  (\gamma|\nabla u_{\eps}|^{p-2}\nabla u_{\eps}) ~dx.
			\end{align*} 
The term $K^{*}$ is known from the knowledge of the DN-map associated with perturbed $p$-Laplacian problem.

We can further rewrite $K^*$ as
			\begin{align*}
			K^{*} = \int_{\Omega} \nabla w \cdot (\gamma|\nabla u_\eps|^{p-2}\nabla u_\eps) ~dx &- \eps^{p-1}~\int_{\Omega}  \nabla w \cdot(\gamma|\nabla u_0|^{p-2}\nabla u_0)~dx\\
			&+\eps^{p-1} \int_{\Omega} \nabla w \cdot(\gamma|\nabla u_0|^{p-2}\nabla u_0) ~dx.
			\end{align*}
By taking into account the estimate \eqref{est11}, we deduce that
			\begin{align*}
			K^{*}& = \eps^{p-1} \int_{\Omega} \nabla w\cdot(\gamma|\nabla u_0|^{p-2}\nabla u_0) ~dx + \mathcal{O}(\eps^{2p-3}).
			\end{align*}
Further, we define $I(u_0,w)$ as follows:
			\begin{align}\label{defn_I}
			I (u_0,w) := \int_{\Omega}\nabla w \cdot(\gamma|\nabla u_0|^{p-2}\nabla u_0) ~dx.
			\end{align}
As a consequence, we have
			\begin{align}
			K^{*}= \eps^{p-1} I(u_0,w) + \mathcal{O}(\eps^{2p-3}),
			\end{align}
where $w\in W^{2,m}(\Omega)$ and $u_0$ satisfies the BVP \eqref{eq1}. Furthermore, we observe that 
\begin{align*}
I (u_0 , w) = \lim_{\eps \rightarrow 0^{+}} \eps^{1-p} \Big(K^{*}-\mathcal{O} (\eps^{2p-3})\Big)
\end{align*}
and subsequently, $I(u_0,w)$  is known to us.
			
Let us choose the functions $u_1,u_2,u_3 \in {C}^\infty (\Omega)$ such that 
\begin{align}\label{cdt}
			\nabla\cdot(\sigma\nabla u_j) =0, \quad j = 1,2,3.
\end{align}
In addition, we assume that $u_1$ is real valued and $\nabla u_1$ does not have any zeros.
Since $I(u_0,w)$ is known for any $w\in W^{2,m}(\Omega)$. In particular $I(u_0,u_3)$ is known. 
By defining $J_1$ and $J_2$ as follows:
			\begin{align*}
			J_1(u_1, u_2, u_3) :&= \frac{2}{p-2} \partial_{\bar z}[ I (u_{1}+z \overline {u_2} , u_3)]\Big|_{z=0}\\
			&= \int_\Omega \gamma|\nabla u_1|^{p-4}(\nabla u_1\cdot \nabla u_2 ) (\nabla u_1 \cdot \nabla u_3)   ~dx, \\
			J_2(u_1 , u_2,u_3) :&= \frac{2}{p-2} \partial_{z}[ I (u_{1}+z u_2 , u_3)]\Big|_{z=0} \\
			&=\int_\Omega \gamma |\nabla u_1|^{p-4}[(\overline{\nabla u_1}\cdot \nabla u_2 ) (\nabla u_1 \cdot \nabla u_3)+\frac{2}{p-2}|\nabla u_1|^{2}(\nabla u_2\cdot \nabla u_3 )]~dx,
			\end{align*}
we conclude that the functionals $J_1(u_1 , u_2,u_3)$ and $J_2(u_1 , u_2,u_3)$ will be known to us.
Furthermore, we define
			\begin{align}\label{b}
			& \beta_{u_1}(x) := \gamma|\nabla u_1(x)|^{p-2}
			\end{align}
			and
			\begin{align*}
			& K (u_1 , u_2,u_3) :=\frac{p-2}{2} [J_2(u_1 , u_2,u_3) - J_1(u_1 , u_2,u_3)].
			\end{align*}
			Hence $K$ can be represented as
			\begin{equation*}
			K (u_1, u_2,u_3) = \int_\Omega \beta_{u_1} (\nabla u_2\cdot \nabla u_3 ) ~dx.
			\end{equation*}
For each choice of $u_2$ and $u_3$ satisfying \eqref{cdt}, the term $ K (u_1, u_2,u_3)$ is known. Since, $\sigma$ is real valued, therefore by choosing $u_3=\overline{u_2}$, we find that
\begin{equation}\label{k}
	K (u_1, u_2,\overline{u_2}) = \int_\Omega \beta_{u_1} \nabla u_2\cdot\overline{\nabla u_2} ~dx,
\end{equation} 
is known to us.  
			
Let $x_0\in \partial\Omega$ be arbitrary, but fixed. From the knowledge of $ K (u_1, u_2,\overline{u_2})$ in \eqref{k}, we will recover $\beta_{u_1}$ at $x_0$. By using \eqref{b}, we recover $\gamma(x_0)$ from $\beta_{u_1}(x_0)$. Following the similar approach presented in \cite{salo2012inverse, brown2001recovering}, we can construct a sequence of solutions $\{u_M\}$ satisfying the conductivity equation \eqref{cdt} with coefficient frozen at the boundary point $x_0$. Suppose $\rho$ is a $C^1$ function and we choose $\zeta$  as a unit tangent vector to $\partial\Omega$ at $x_0$ such that \begin{align}\label{temp_1}
			\zeta \cdot \nabla\rho(x_0)=0,\qquad |\zeta|=|\nabla\rho(x_0)|.
\end{align}
Furthermore, we choose a cut-off function $\eta \in C^{\infty}(\mathbb{R}^n)$ with $0\leq \eta (x-x_0) \leq 1$ such that 
			\begin{align*}
			\eta (x-x_0) = 
			\begin{cases}
			1, & \quad |x-x_0| \leq \frac{1}{2},\\
			0, & \quad |x-x_0| \geq 1.
			\end{cases}
			\end{align*}
For large positive numbers $M$ and $N$, we define $\eta_M(x)=\eta(Mx)$ and $N=N(M)$ with $\quad \frac{M}{N}=\smallO (1)$ as $M\rightarrow \infty$. Subsequently, we can deduce the following:
 \begin{align}\label{temp:conv}
 M^{n-1} N \int_{\Omega} \eta(Mx)\exp({-2N\rho(x)})~dx \longrightarrow \frac{1}{2}\int_{\R^{n-1}} \eta (x',x_0)~dx' \quad \text{ as } M\rightarrow \infty
 \end{align}
since $\eta$ is supported in the ball $B(x_0,1)$.

For each $M \in \mathbb{N}$ and $M\geq 2$, we define a solution of \eqref{cdt} as 
\begin{align}\label{temp: recon_1}
  u_M := C_{M,N} (w_0 + w_1),
\end{align}
where $C_{M,N}$ is a scaling constant and it is described by $$C_{M,N}=\sqrt{\frac{M^{n-1}N^{-1}}{c_\eta}}$$ 
			in which $c_{\eta}$ is given by   
			$$\qquad c_\eta := \frac{1}{2}\int_{\R^{n-1}} \eta (x',x_0)~dx'.$$ 
In \eqref{temp: recon_1}, we consider $w_0$ as an approximate solution of \eqref{cdt} and it assumes the following form
			$$ w_0(x)= \eta (M(x-x_0)) \exp\big({N(i \zeta \cdot x-\rho (x))}\big), \qquad x\in \mathbb{R}^n.$$ 
The above form of $w_0$ is justified thanks to \eqref{temp_1}.
As a consequence, we observe that $w_1$ satisfies 
			\begin{align*}
			\begin{cases}
			\nabla\cdot (\sigma(x)\nabla w_1(x))=-\nabla\cdot (\sigma(x)\nabla w_0(x)), & \quad x\in\Omega, \\
			w_1(x) =0, & \quad x\in\pl\Omega,
			\end{cases}
			\end{align*}
in addition with \cite{salo2012inverse}
	\begin{align}\label{int1234}
		\int_{\Omega}|\nabla w_0|^2~dx=\mathcal{O} (M^{1-n}N), \qquad \int_{\Omega}|\nabla w_1|^2~dx=\smallO (M^{1-n}N) \quad \text{ as } M\rightarrow \infty.
	\end{align}\\
Note that $w_0\in C^1_c(\mathbb{R}^n)$ is vanishing outside a small ball $B(x_0,1/M)$. For this choice of sequence $\{u_M\}$ in \eqref{temp: recon_1} we get
			\begin{align}\label{it123}
			\int_\Omega \beta_{u_1}\nabla u_M\cdot\overline{\nabla u_M}~dx=\frac{M^{n-1}N^{-1}}{c_\eta}\int_\Omega \beta_{u_1} (|\nabla w_0|^2+\nabla w_0\cdot\overline{\nabla w_1}+\overline{\nabla w_0}\cdot\nabla w_1+|\nabla w_1|^2)~dx.
			\end{align} 
We observe that 
\begin{align*}
\bigg| \int_\Omega \beta_{u_1} \big(\nabla w_0\cdot\overline{\nabla w_1} & +\overline{\nabla w_0}\cdot\nabla w_1+|\nabla w_1|^2\big)~dx\bigg| \\
& \leq C\big(\|\nabla w_0\|_{L^2(\Omega)} + \|\nabla w_1\|_{L^2(\Omega)}\big)\|\nabla w_1\|_{L^2(\Omega)} = \smallO (M^{1-n}N),
\end{align*}
where we have used the Cauchy-Schwarz inequality and estimates \eqref{int1234}.

For the first term on right hand side of \eqref{it123}, we have
 \begin{align*}
 \nabla w_0 & = N(i\zeta - \nabla\rho)\eta_M \exp\big({N(i\zeta\cdot x - \rho(x))}\big) + M(\nabla\eta)(M\cdot)\exp\big({N(i\zeta\cdot x - \rho(x))}\big)\\
 & = : T_1 + T_2,
 \end{align*}
 and consequently,
 \begin{align*}
 \int_{\Omega} \beta_{u_1}(x)|\nabla w_0|^2 ~dx &= \beta_{u_1}(x_0) \int_{\Omega} |\nabla w_0|^2 ~dx + \int_{\Omega} \big(\beta_{u_1}(x)-\beta_{u_1}(x_0) \big)|\nabla w_0|^2 ~dx\\
 & = \beta_{u_1}(x_0) \int_{\Omega} |T_1|^2~dx + \beta_{u_1}(x_0) \int_{\Omega}(T_1\cdot \overline{T_2}+  \overline{T_1}\cdot T_2 + |T_2|^2)~dx\\
 & \qquad+ \int_{\Omega} \big(\beta_{u_1}(x)-\beta_{u_1}(x_0) \big)|\nabla w_0|^2 ~dx\\
 & = \beta_{u_1}(x_0) \int_{\Omega} |T_1|^2~dx +\smallO(M^{1-n}N),
 \end{align*}
 where we have used the fact that $\|T_1\|^2_{L^2(\Omega)} = \mathcal{O}(M^{1-n}N)$, and $\|T_2\|^2_{L^2(\Omega)} = \smallO(M^{1-n}N)$ as $M\rightarrow \infty$, and the continuity of $\beta_{u_1}$ at $x_0$.
 Moreover, we have
 \begin{align*}
 M^{n-1}N^{-1}\int_{\Omega} |T_1|^2~dx & = M^{1-n}N \int_{\Omega} \eta(M(x-x_0))^2 \exp({-2N\rho})(1+|\nabla\rho|^2)~dx\\
 &= M^{1-n}N \bigg(2\int_{\Omega} \eta(M(x-x_0))^2 \exp({-2n\rho})~dx \\
 &\qquad + \int_{\Omega} \eta(M(x-x_0))^2 \exp({-2N\rho})(|\nabla\rho|^2-|\nabla \rho(x_0)|^2)~dx\bigg),
 \end{align*}
 where the first term and second term in the right hand side converges to $c_{\eta}$ (due to \eqref{temp:conv}) and $\smallO(1)$ (since $x\mapsto |\nabla\rho|^2$ is continuous at $x_0$) respectively as $M\rightarrow \infty$. Hence we obtain
\begin{align}\label{int12345}
	 \frac{M^{n-1}N^{-1}}{c_\eta} \int_\Omega \beta_{u_1}(x) |\nabla w_0|^2 ~dx  \longrightarrow \beta_{u_1}(x_0) \quad \text{ as } M\rightarrow\infty.
\end{align}
As a consequence, we conclude that from \eqref{it123} 
	\begin{align*}
		\int_\Omega \beta_{u_1}(x)\nabla u_M\cdot\overline{\nabla u_M}~dx \longrightarrow \beta_{u_1}(x_0) \quad \text{ as } M\rightarrow \infty.
	\end{align*}
			
In \eqref{k} replacing $u_2 $ by $u_M $, we are able to find $\beta_{u_1}(x_0)$. Hence $\gamma(x_0)$ is known from \eqref{b}. Since $x_0\in\partial\Omega$ is arbitrary, one can recover $\gamma$ at the boundary of the domain.
			
\subsubsection{Case: $1<p<2$}
Let $w$ be any function such that $w \in {W}^{1,2}(\Omega)$ and it satisfies
\begin{align}\label{temp:eqn_w}
\nabla\cdot (\sigma \nabla w) = 0 \quad \text{ in }\Omega.
\end{align}
We multiply the equation \eqref{gov_eqn_2} by $w$ and integrate the resulting equation over $\Omega$ to obtain
			\begin{equation}\label{01}
			\int_{\Omega} w \Big(\nabla\cdot (\sigma\nabla u_{\eps} +\gamma|\nabla u_{\eps}|^{p-2}\nabla u_{\eps})\Big) ~~dx = 0.
			\end{equation}
The above equation reduces to 
			\begin{equation*}
			\int_{\Omega} \nabla w \cdot (\sigma\nabla u_{\eps} +\gamma|\nabla u_{\eps}|^{p-2}\nabla u_{\eps}) ~dx = \int_{\partial\Omega} w  (\sigma + \gamma|\nabla u_{\eps}|^{p-2})\partial_\nu u_{\eps} ~dS,
			\end{equation*}
			where we have applied integration by parts. With the help of \eqref{temp:eqn_w} we get
\begin{equation}\label{int12}
\int_{\Omega} \nabla w \cdot (\gamma|\nabla u_{\eps}|^{p-2}\nabla u_{\eps}) ~dx = \int_{\partial\Omega} w  (\sigma + \gamma|\nabla u_{\eps}|^{p-2})\partial_\nu u_{\eps} ~dS
- \varepsilon^{-1}\langle\Lambda_{\sigma}(f),w|_{\pl\Omega} \rangle.
\end{equation}			
Since the DN-maps $\Lambda_{\sigma,\gamma,p}$ and $\Lambda_{\sigma}$ defined in a weak sense in \eqref{dn1} are given to us, therefore the right hand side of \eqref{int12} is also known.
			
Let us define
\begin{align*}
	L^{*} &:= \int_{\Omega} \nabla w \cdot  (\sigma\nabla u_{\eps} +\gamma|\nabla u_{\eps}|^{p-2}\nabla u_{\eps}) ~dx.
\end{align*} 
From the knowledge of the DN-map associated with perturbed problem, we know the integral $L^{*}$.
Moreover, $L^{*}$ can be represented as follows
			\begin{align*}
			L^{*} & = \int_{\Omega} \nabla w \cdot (\gamma|\nabla u_\eps|^{p-2}\nabla u_\eps) ~dx - \eps^{1-p}~\int_{\Omega}  \nabla w \cdot(\gamma|\nabla u_0|^{p-2}\nabla u_0)~dx\\
			&\qquad \qquad\qquad +\eps^{1-p} \int_{\Omega} \nabla w \cdot(\gamma|\nabla u_0|^{p-2}\nabla u_0) ~dx\\
			& = \eps^{1-p} \int_{\Omega} \nabla w \cdot(\gamma|\nabla u_0|^{p-2}\nabla u_0) ~dx + \mathcal{O}(\eps^{3-2p})\\
			& = \eps^{1-p} I(u_0,w) + \mathcal{O}(\eps^{3-2p}),
			\end{align*}
where the estimate \eqref{est12} is used and $I(u_0,w)$ is defined in \eqref{defn_I}. Hence 
			\begin{align} \label{tmp_1}
			I (u_0,w) = \int_{\Omega}\nabla w \cdot(\gamma|\nabla u_0|^{p-2}\nabla u_0) ~dx
			\end{align}
is known to us.
As we have described a reconstruction procedure of the coefficient at the boundary using $I(u_0,w)$ in case of $p>2$, in a similar way, we can recover the coefficient $\gamma$ at the boundary $\partial\Omega$ from \eqref{tmp_1} for $1<p<2$.
 \section{Proof for the Theorem  \ref{thm_2}} \label{sec_4}
In this section, we present a procedure for the reconstruction of normal derivative of $\gamma$ at the boundary point $x_0$ of $\Omega$ under certain regularity assumption on $\gamma$. Unlike in \cite{nac,ntt}, we do not require the values of $\gamma$ in the neighbourhood of $x_0$. However, we require certain regularity assumption on $\gamma$ around $x_0$. We also use the fact that $\gamma$ is known at $x_0$ as described in section \ref{sec_3}.
\subsection{Reconstruction of normal derivative of $\sigma$}
Let $\alpha\in\mathbb{R}^n\backslash\{0\}$. The reconstruction of $\partial_{\alpha}\sigma$ from the knowledge of the DN-map $\Lambda_\sigma(f)$ associated to BVP \eqref{eq1}, can be done using \cite{nt, ntt}.
\subsection{Reconstruction of normal derivative of $\gamma$}
For every $x\in \mathbb{R}^n$, we write $x=(x',x_n)$ with $x'\in \mathbb{R}^{n-1}$.
Without loss of generality, we assume that $x_0=0$ and $\pl\Omega$ is flat around $x_0$. More precisely, there exists $\delta>0$ such that
\begin{align}\label{temp_local_coord}
\Omega \cap B_{\delta}(x_0) = \{x\in B_{\delta}(x_0): x_n > 0\}.
\end{align}
In fact, using the boundary normal coordinates \cite{lee1989determining}, we can locally transform the coefficients to the form \eqref{temp_local_coord}. Let $\zeta=(\zeta',0)$ be an unit tangent vector to $\pl\Omega$ at $x_0$.

Motivated by \cite{salo2012inverse,ntt}, we wish to choose the approximate solutions $\Phi_N$ and $\Psi_N$ having oscillating boundary data supported in a neighbourhood of $x_0$. For $N\in\N$ and $x\in \pl\Omega$, we consider 
\begin{equation}\label{defn_phi_psi}
\begin{split}
\Phi_N(x) &:= \exp\big({iN x' \cdot \zeta'-N x_n}\big)\eta(\sqrt{N}x'),\\
\Psi_N(x)&:= \exp\Big({i\frac{N}{2}x' \cdot \zeta'-\frac{N}{2}x_n} \Big)\eta(\sqrt{N}x');
\end{split}\end{equation}
corresponding to the boundary data
\begin{align}
    f_N(x') & = \exp\big({iN x' \cdot \zeta'}\big)\eta(\sqrt{N}x'), \qquad 
    g_N(x') = \exp\big({i\frac{N}{2} x' \cdot \zeta'}\big)\eta(\sqrt{N}x');
\end{align}
respectively, where $\eta\in C^2$ is a cutoff function supported in a unit ball such that
\begin{equation}\label{defn_eta}
0\leq \eta(x')\leq 1, \quad \int_{\mathbb{R}^{n-1}} \eta^2(x')~dx'=1.  
\end{equation}
We have observed that (cf. \eqref{k})
\begin{align*}
K(u_1,u_2,u_3) = \int \beta_{u_1}(x)\nabla u_2\cdot\nabla u_3~dx
\end{align*}
is known for any $u_1, u_2$ and $u_3$ satisfying the conductivity equation \eqref{cdt}. Let us denote $h:=\beta_{u_1}$.
To prove the Theorem \ref{thm_2}, we use the following proposition:
\begin{proposition}\label{prop_1}
	Let $\pl\Omega$ be locally $C^2$ at $x_0$. Suppose $D^{\alpha'}_{x'}D^{\alpha_n}_{x_n} \gamma$ is continuous around $x_0$ for any multi-index $(\alpha',\alpha_n)$ such that $|\alpha'|+2\alpha_n\leq 2$. Then
	\begin{align*}
	\lim_{N\rightarrow\infty} N^{\frac{n-1}{2}} \Big(2 K(u_1,u_2,\overline{u_2}) - K(u_1,u_3,\overline{u_3})\Big) & = \frac{1}{2}{\pl}_{x_n}h(x_0) + \frac{3}{2} h(x_0)\int_{\R^{n-1}}|\nabla_{x'}\eta|^2~dx'.
	\end{align*}
\end{proposition}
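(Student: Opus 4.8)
\emph{Setup and reduction.} I would take $u_2:=\Psi_N$ and $u_3:=\Phi_N$, so that the doubled term in the combination carries the lower frequency $N/2$; this choice is forced, since it is exactly what makes the leading divergent terms cancel. Recall $h=\beta_{u_1}=\gamma|\nabla u_1|^{p-2}$; because $\sigma\in C^\infty(\overline\Omega)$ and $\nabla u_1$ has no zeros, $h$ inherits the hypothesised regularity of $\gamma$, i.e. $D^{\alpha'}_{x'}D^{\alpha_n}_{x_n}h$ is continuous near $x_0=0$ whenever $|\alpha'|+2\alpha_n\le 2$. The functions $\Phi_N,\Psi_N$ in \eqref{defn_phi_psi} solve \eqref{cdt} with the coefficient frozen at $x_0$; writing the exact solutions with the same boundary traces as $\Phi_N$ (resp. $\Psi_N$) plus an interior correction and invoking energy estimates of the type \eqref{int1234}, the variable-coefficient corrections contribute $\smallO(N^{-(n-1)/2})$ to $K(u_1,\cdot,\cdot)$. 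Hence it suffices to evaluate $K(u_1,u_2,\overline{u_2})=\int_\Omega h\,|\nabla\Psi_N|^2\,dx$ and $K(u_1,u_3,\overline{u_3})=\int_\Omega h\,|\nabla\Phi_N|^2\,dx$ with the explicit functions, and by the flatness \eqref{temp_local_coord}, the compact support of $\eta(\sqrt N\,\cdot)$ and the exponential decay in $x_n$ these integrals localise to $\{x_n>0\}$ near $x_0$ up to exponentially small error.

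\emph{Gradient computation and rescaling.} A direct differentiation, using $|\zeta'|=1$ and $\zeta=(\zeta',0)$, gives
\begin{align*}
|\nabla\Phi_N|^2 &= e^{-2Nx_n}\Big(2N^2\,\eta(\sqrt N x')^2 + N\,|\nabla_{x'}\eta(\sqrt N x')|^2\Big),\\
|\nabla\Psi_N|^2 &= e^{-Nx_n}\Big(\tfrac{N^2}{2}\,\eta(\sqrt N x')^2 + N\,|\nabla_{x'}\eta(\sqrt N x')|^2\Big),
\end{align*}
where the cross terms between the oscillatory factor and $\nabla_{x'}\eta$ cancel because $i\zeta'\cdot\nabla_{x'}\eta$ enters with opposite signs under complex conjugation. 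Substituting into $2K(u_1,u_2,\overline{u_2})-K(u_1,u_3,\overline{u_3})$, multiplying by $N^{(n-1)/2}$ and changing variables $x'=y'/\sqrt N$ (whose Jacobian $N^{-(n-1)/2}$ cancels the prefactor), I reduce to the asymptotics of integrals $\int_0^\infty\!\!\int_{\R^{n-1}} h(y'/\sqrt N,x_n)\,e^{-cNx_n}[\cdots]\,dy'\,dx_n$, evaluated via $\int_0^\infty x_n^k e^{-cNx_n}\,dx_n=k!/(cN)^{k+1}$. Since $x'\sim N^{-1/2}$ and $x_n\sim N^{-1}$, a monomial $(x')^{\alpha'}x_n^{\alpha_n}$ is of size $N^{-(|\alpha'|+2\alpha_n)/2}$, so precisely the Taylor terms with $|\alpha'|+2\alpha_n\le 2$ survive through order $N^0$, and all of them are available by hypothesis; accordingly I expand $h(y'/\sqrt N,x_n)=h(x_0)+N^{-1/2}y'\cdot\nabla_{x'}h(x_0)+x_n\pl_{x_n}h(x_0)+\tfrac{1}{2N}(y')^{\!\top}D^2_{x'}h(x_0)\,y'+\mathcal R$.

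\emph{Cancellations and conclusion.} Performing the $x_n$-integrations, the factor $2$ on the lower-frequency term is tuned so that the $\mathcal{O}(N)$ contribution ($h(x_0)$) cancels; the $\mathcal{O}(N^{1/2})$ contribution ($\nabla_{x'}h(x_0)$) and the $\mathcal{O}(1)$ second tangential contribution ($D^2_{x'}h(x_0)$) cancel as well, because the relevant $\eta^2$-moments appear with matching coefficients in the two $K$-terms. The two surviving $\mathcal{O}(1)$ contributions are the $x_n\pl_{x_n}h(x_0)$ term tested against the $N^2\eta^2$ kernel, giving $\tfrac12\pl_{x_n}h(x_0)$ (using $\int\eta^2=1$ from \eqref{defn_eta}), and the leading $h(x_0)$ term tested against the $N|\nabla_{x'}\eta|^2$ kernel, giving $\tfrac32 h(x_0)\int_{\R^{n-1}}|\nabla_{x'}\eta|^2\,dx'$. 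Together these yield the asserted limit.

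\emph{Main obstacle.} The delicate point is the rigorous control of $\mathcal R$: the hypothesis deliberately excludes $\pl_{x_n}^2 h$ and the mixed derivative $\pl_{x'}\pl_{x_n}h$, so I cannot simply bound a full second-order remainder. Instead I would expand to first order in $x_n$ with integral remainder (needing only continuity of $\pl_{x_n}h$) and to second order in $x'$ (needing only continuity of $D^2_{x'}h$), write the error as $x_n\big(\pl_{x_n}h(\cdot)-\pl_{x_n}h(x_0)\big)+\smallO(|x'|^2)$, and then pass to the limit by dominated convergence, exploiting the exponential weight in $t=Nx_n$ and the compact support of $\eta$ to dominate the rescaled integrand uniformly in $N$. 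Verifying that every such remainder piece is $\smallO(1)$ after multiplication by $N^{(n-1)/2}$, under exactly the anisotropic regularity $|\alpha'|+2\alpha_n\le 2$, is the technical heart of the argument.
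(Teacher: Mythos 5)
Your proposal follows essentially the same route as the paper's own proof: there, too, $u_2$ and $u_3$ are taken to be the oscillating profiles $\Psi_N,\Phi_N$ (multiplied by a normal cutoff $\xi(\sqrt{N}x_n)$) plus exact correctors $s_N,r_N$, all corrector cross terms are disposed of as $\smallO(N^{-\frac{n-1}{2}})$ by citing \cite{nt,ntt,kang2002boundary}, and the main term is evaluated through the same gradient identities, the rescaling $y'=\sqrt{N}x'$, $y_n=Nx_n$, and the same anisotropic Taylor expansion \eqref{i1}, with the constants $\frac{1}{2}$ and $\frac{3}{2}$ arising exactly as in your computation from $\int_0^\infty y_n\big(e^{-y_n}-2e^{-2y_n}\big)\,dy_n$ and $\int_0^\infty\big(2e^{-y_n}-e^{-2y_n}\big)\,dy_n$. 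The only cosmetic differences are that the paper controls the cutoff transition region ($T_{12}$) by an explicit $\mathcal{O}\big(e^{-\sqrt{N}/2}\big)$ bound where you invoke exponential decay in $x_n$, and that the remainder control you rightly single out as the technical heart is precisely what the paper's expansion \eqref{i1} asserts with a uniform $\smallO(N^{-1})$ but does not detail.
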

\begin{proof}
Let $\xi\in C^\infty([0,\infty))$ be a cutoff function which satisfies $0\leq \xi\leq 1$ with
\begin{align*}
\xi(x_n) = 
\begin{cases}
1, & \quad 0\leq x_n\leq \frac{1}{2},\\
0, & \quad x_n \geq 1.
\end{cases}
\end{align*}
For each $ N\in \mathbb{N}$, we define $\xi_N(x_n):=\xi(\sqrt{N}x_n)$. Our computation is heavily relying on the choice of approximate solutions of conductivity equation \eqref{cdt}. As a consequence, we consider
\begin{equation}\label{sol}
u_2=\xi_N\Psi_N + s_N, \qquad u_3=\xi_N\Phi_N + r_N,
\end{equation}
where $\Phi_N$ and $\Psi_N$ are defined in \eqref{defn_phi_psi}, and $s_N$ satisfies
\begin{equation}\label{sn}
\begin{cases}
\nabla\cdot (\sigma(x)\nabla s_N)=-\nabla\cdot \big(\sigma(x)\nabla (\xi_N\Psi_N)\big), & \quad x\in\Omega, \\
s_N =0, & \quad x\in\pl\Omega,
\end{cases}
\end{equation}
and similarly, $r_N$ satisfies
\begin{equation}\label{rn}
\begin{cases}
\nabla\cdot (\sigma(x)\nabla r_N)=-\nabla\cdot \big(\sigma(x)\nabla (\xi_N\Phi_N)\big), & \quad x\in\Omega, \\
r_N =0, & \quad x\in\pl\Omega.
\end{cases}
\end{equation}
Taking into account \eqref{sol}, we have
\begin{align*}
2 & K (u_1,u_2,\overline{u_2})- K (u_1,u_3,\overline{u_3})\\
&=\int_\Omega h(x) \bigg\{2\nabla (\xi_N\Psi_N + s_N)\cdot\overline{\nabla (\xi_N\Psi_N + s_N)}-\nabla (\xi_N\Phi_N + r_N)\cdot\overline{\nabla (\xi_N\Phi_N + r_N)}\bigg\} ~dx\\
&=\int_\Omega h(x) \bigg\{2\nabla (\xi_N\Psi_N)\cdot\overline{\nabla (\xi_N\Psi_N)}-\nabla (\xi_N\Phi_N)\cdot\overline{\nabla (\xi_N\Phi_N)}\bigg\} ~dx \\
&\quad  + 2\int_\Omega h(x) \nabla s_N \cdot\overline{\nabla (\xi_N\Psi_N)} ~dx
 - \int_\Omega h(x) \nabla r_N \cdot\overline{\nabla (\xi_N\Phi_N)}  ~dx \\
& \quad +2\int_\Omega h(x) \nabla (\xi_N\Psi_N) \cdot\overline{\nabla s_N } ~dx
 -\int_\Omega h(x) \nabla (\xi_N\Psi_N) \cdot\overline{\nabla r_N } ~dx\\
&\quad + 2\int_\Omega h(x) \nabla s_N \cdot\overline{\nabla s_N } ~dx 
-\int_\Omega h(x) \nabla r_N \cdot\overline{\nabla r_N } ~dx\\
& =: T_1+T_2+T_3+T_4+T_5+T_6+T_7,
\end{align*}
where $T_2$ is given by
\begin{align*}
T_2=2\int_\Omega h(x) \nabla s_N \cdot\overline{\nabla (\xi_N\Psi_N)} ~dx,
\end{align*}
and using the arguments from \cite{nt,ntt} we have $T_2 = \smallO(N^{-\frac{n-1}{2}})$ as $N\rightarrow\infty$. Following the similar argument in \cite{kang2002boundary}, we can demonstrate that $N\rightarrow\infty$,
\begin{align*}
T_3 &=- \int_\Omega h(x) \nabla r_N \cdot\overline{\nabla (\xi_N\Phi_N)}  ~dx= \smallO(N^{-\frac{n-1}{2}}),\\
T_4 &=2\int_\Omega h(x) \nabla (\xi_N\Psi_N) \cdot\overline{\nabla s_N } ~dx= \smallO(N^{-\frac{n-1}{2}}),\\
T_5 &=-\int_\Omega h(x) \nabla (\xi_N\Psi_N) \cdot\overline{\nabla r_N } ~dx= \smallO(N^{-\frac{n-1}{2}}),
\end{align*}
and using the equation \eqref{sn}-\eqref{rn} and estimates given for $T_2$ and $T_3$ one can show that as $N\rightarrow\infty$,
\begin{align*}
T_6&=2\int_\Omega h(x) \nabla s_N \cdot\overline{\nabla s_N } ~dx= \smallO(N^{-\frac{n-1}{2}}),\\
T_7&=-\int_\Omega h(x) \nabla r_N \cdot\overline{\nabla r_N } ~dx= \smallO(N^{-\frac{n-1}{2}}).
\end{align*}
We set the scaling transformation 
\begin{equation}\label{trns}
y_i=\sqrt{N}x_i, \quad i=1,2,\cdots,n-1,\qquad y_n= Nx_n.
\end{equation}
Taking into account the supports of $\xi_N\Psi_N (y)$ and $\xi_N\Phi_N (y)$, we define
\begin{align*}
 \Omega_N=\Big \{x:|x'|\leq \frac{1}{\sqrt{N}}, 0\leq x_n \leq \frac{1}{2\sqrt{N}}\Big\},\quad
 \Omega'_N=\Big \{x:|x'|\leq \frac{1}{\sqrt{N}}, \frac{1}{2\sqrt{N}}\leq x_n\leq \frac{1}{\sqrt{N}}\Big\}
\end{align*}
and hence, 
we can rewrite $T_1$ as 
\begin{align*}
T_1 & =\int_{\Omega_N} h(x) \bigg\{2\nabla \Psi_N\cdot\overline{\nabla \Psi_N}
- \nabla \Phi_N\cdot\overline{\nabla \Phi_N}\bigg\}~dx\\
&\quad +\int_{\Omega_N'} h(x) \bigg\{2\nabla (\xi_N\Psi_N)\cdot\overline{\nabla (\xi_N\Psi_N)} - \nabla (\xi_N\Phi_N)\cdot\overline{\nabla (\xi_N\Phi_N)}\bigg\} ~dx\\
&= T_{11} + T_{12},
\end{align*}
where $T_{11}$ and $T_{12}$ are given by
\begin{align*}
&T_{11} :=\int_{\Omega_N} h(x) \bigg\{2\nabla \Psi_N\cdot\overline{\nabla \Psi_N}-\nabla \phi_N\cdot\overline{\nabla \Phi_N}\bigg\}~dx,\\
&T_{12}:=\int_{\Omega'_N} h(x) \bigg\{2\nabla (\xi_N\Psi_N)\cdot\overline{\nabla (\xi_N\Psi_N)}-\nabla (\xi_N\Phi_N)\cdot\overline{\nabla (\xi_N\Phi_N)}\bigg\} ~dx.
\end{align*}
$T_{11}$ can be further represented by
\begin{align*}
T_{11}&=\int_{\Omega_N} h(x) \bigg\{2\nabla \Psi_N\cdot\overline{\nabla \Psi_N} - \nabla \Phi_N\cdot\overline{\nabla \Phi_N}\bigg\}~dx\\
&=N^2\int_{\Omega_N} h(x',x_n) \eta^2(\sqrt{N}x')~\big(\exp({-Nx_n})-2\exp({-2Nx_n})\big)~dx\\
&\qquad +N \int_{\Omega_N} h(x',x_n) \vert(\nabla_{y'}\eta)(\sqrt{N}x')\vert^2~\big(2\exp({-Nx_n})- \exp({-2Nx_n})\big)~dx,
\end{align*}
and using the scaling transformation introduced in \eqref{trns}, we get
\begin{equation}\label{temp_deri_1}
\begin{split}
N^{\frac{n-1}{2}} T_{11} &=N\int^{\sqrt{N}/2}_{0} \int_{|y'|\leq 1} h\left(\frac{y'}{\sqrt{N}},\frac{y_n}{N}\right) \eta^2(y')~\big(\exp({-y_n})-2\exp({-2y_n})\big)~dy'dy_n\\
&\quad +\int^{\sqrt{N}/2}_{0} \int_{|y'|\leq 1} h\left(\frac{y'}{\sqrt{N}},\frac{y_n}{N} \right) |(\nabla_{y'}\eta)(y')|^2~\big(2\exp({-y_n})-\exp({-2y_n})\big)~~dy'dy_n.
\end{split}\end{equation}
From the regularity assumption on $h$, we can expand $h$ around $x_0=0$. More precisely, we can write
\begin{equation}\label{i1}
\begin{split}
h\left(\frac{y'}{\sqrt{N}},\frac{y_n}{N}\right)=&h(x_0) +\frac{1}{\sqrt{N}}\sum_{|\alpha'|=1}{\pl}^{|\alpha'|}_{x'^{\alpha'}}h(x_0)y'^{\alpha'} +\frac{1}{N}\pl_{x_n} h(x_0)y_n\\
&+\frac{1}{N}\sum_{|\alpha'|=2}\frac{1}{\alpha !} {\partial}^{|\alpha'|}_{x'^{\alpha'}}h(x_0)y'^{\alpha'}  +\smallO(N^{-1}) \qquad \text{as } N\rightarrow\infty.
\end{split}
\end{equation}
It is observed that as $N\rightarrow\infty$,
\begin{align*}
&\int^{\sqrt{N}/2}_{0}(\exp{-y_n}-2\exp{-2y_n})~dy_n=\mathcal{O}\big(\exp({-\sqrt{N}/{2}})\big),\\
&\int^{\sqrt{N}/2}_{0}\big(2\exp({-y_n})-\exp({-2y_n})\big)~dy_n=\frac{3}{2}
+\mathcal{O}\big(\exp({-\sqrt{N}/{2}})\big),\\
&\int^{\sqrt{N}/2}_{0}y_n \big(\exp({-y_n})-2\exp({-2y_n})\big)~dy_n=\frac{1}{2}
+\mathcal{O}(\sqrt{N}e^{-\sqrt{N}/{2}}).
\end{align*}
With the help of above estimates and expansion \eqref{i1}, and using the properties of $\eta$ introduced in \eqref{defn_eta}, we end up with
\begin{equation}\label{esti_1}
\begin{split}
N^{\frac{n-1}{2}}&\int_{\Omega_N} h(x) \bigg\{2\nabla \Psi_N\cdot\overline{\nabla \Psi_N}
- \nabla \Phi_N\cdot\overline{\nabla \Phi_N}\bigg\}~dx\\
&=\frac{1}{2}\pl_{x_n} h(x_0) + \frac{3}{2} h(x_0) \int_{\mathbb{R}^{n-1}}\vert(\nabla_{x'}\eta)\vert^2~dx'+\smallO(1) \qquad \text{as }N\rightarrow \infty.
\end{split}
\end{equation}
We see that
\begin{align*}
    \nabla(\xi_N\Psi_N) = \exp\left(i\frac{N}{2}x'\cdot\zeta' - \frac{N}{2}x_n\right)\begin{bmatrix}
    \Big\{i\frac{N}{2}\zeta' \eta(\sqrt{N}x')+\sqrt{N}(\nabla_{y'}\eta)(\sqrt{N}x')\Big\}\xi(\sqrt{N}x_n)\\
    \Big\{-\frac{N}{2}\xi(\sqrt{N}x_n) + \sqrt{N} (\nabla_{x_n}\xi)(\sqrt{N}x_n)\Big\}\eta(\sqrt{N}x')
    \end{bmatrix},
\end{align*}
and
\begin{align*}
    \nabla(\xi_N\Phi_N) = \exp\left(iN x'\cdot\zeta' - N x_n\right)\begin{bmatrix}
    \Big\{iN\zeta' \eta(\sqrt{N}x') + \sqrt{N}(\nabla_{y'}\eta)(\sqrt{N}x')\Big\}\xi(\sqrt{N}x_n)\\
    \Big\{-N\xi(\sqrt{N}x_n) + \sqrt{N} (\nabla_{x_n}\xi)(\sqrt{N}x_n)\Big\}\eta(\sqrt{N}x')
    \end{bmatrix}.
\end{align*}
As a consequence, we have
\begin{align*}
T_{12}& = \int_{\Omega'_N} h(x)\bigg\{2\nabla (\xi_N\Psi_N)\cdot\overline{\nabla (\xi_N\Psi_N)}-\nabla (\xi_N\Phi_N)\cdot\overline{\nabla (\xi_N\Phi_N)}\bigg\} ~dx\\
& = {N^2}\int_{\Omega'_N} h(x',x_n)\xi^2(\sqrt{N}x_n)\eta^2(\sqrt{N}x') 
\{\exp(-Nx_n) - 2\exp(-2Nx_n)\}~dx\\
& + N \int_{\Omega'_N} h(x',x_n)\xi^2(\sqrt{N}x_n)|\nabla_{y'}\eta|^2(\sqrt{N}x')\{2\exp(-Nx_n) - \exp(-2Nx_n)\}~dx\\
& -2N^{3/2} \int_{\Omega'_N} h(x)\xi(\sqrt{N}x_n)\eta(\sqrt{N}x')(\nabla_{x_n}\xi)(\sqrt{N}x_n)\{\exp(-Nx_n) - \exp(-2Nx_n)\}~dx\\
& + N \int_{\Omega'_N} h(x',x_n) \eta^2(\sqrt{N}x') |(\nabla_{x_n}\xi)(\sqrt{N}x_n)|^2 \{2\exp(-Nx_n) - \exp(-2Nx_n)\}~dx\\
& =: I + II + III + IV.
\end{align*}
Note that
\begin{align*}
I &= {N^2}\int_{\Omega'_N} h(x',x_n)\xi^2(\sqrt{N}x_n)\eta^2(\sqrt{N}x') 
\{\exp(-Nx_n) - 2\exp(-2Nx_n)\}~dx\\
& = \frac{N}{N^{\frac{n-1}{2}}} \int_{\frac{\sqrt{N}}{2}}^{\sqrt{N}}
\int_{|y'|\leq 1} h\left(\frac{y'}{\sqrt{N}},\frac{y_n}{N}\right)\xi^2\left(\frac{y_n}{\sqrt{N}} \right) \eta^2(y') \{\exp(-y_n) - 2\exp(-2y_n)\}~dy'dy_n,
\end{align*}
where we have used the scaling transformation \eqref{trns}. Taking into account
the inequality
\begin{align*}
\int_{\frac{\sqrt{N}}{2}}^{\sqrt{N}} \{\exp(-y_n) - 2\exp(-2y_n)\}~dy_n
= \mathcal{O}\big(\exp(-\sqrt{N}/2)\big) \quad \text{as } N \rightarrow\infty,
\end{align*}
we can deduce that
\begin{align*}
    |I| \leq C\exp(-\sqrt{N}/2)\quad \text{ as } N \rightarrow\infty,
\end{align*}
for some constant $C$. Following the similar argument as above, we see that
\begin{align*}
    |II| + |III| + |IV| \leq C\exp(-\sqrt{N}/2)\quad \text{as } N \rightarrow\infty.
\end{align*}
Subsequently, we can conclude that
\begin{align}\label{esti_2}
    T_{12} =\mathcal{O}\big(\exp({-\sqrt{N}/{2}})\big) \quad \text{as } N\rightarrow\infty.
\end{align}
Hence combining the above estimates \eqref{esti_1} and \eqref{esti_2}, \eqref{temp_deri_1} reduces to
\begin{equation}\label{final}
\begin{split}
\lim_{N\rightarrow\infty} N^{\frac{n-1}{2}}\big(2K (u_1,u_2,\overline{u_2})
& - K (u_1,u_3,\overline{u_3})\big)\\
& =\frac{1}{2}\pl_{x_n}h(x_0)
+ \frac{3}{2} h(x_0) \int_{\mathbb{R}^{n-1}}\vert(\nabla_{x'}\eta)\vert^2~dx'.
\end{split}
\end{equation}
Hence the Proposition \ref{prop_1} follows.
\end{proof}

In section \ref{sec_3}, we have recovered $h$ at the boundary of $\Omega$, hence $h(x_0)$ is known to us. The cutoff function $\eta$ introduced in \eqref{defn_eta} is arbitrary, the right hand side term 
\begin{align*}
h(x_0) \int_{\mathbb{R}^{n-1}}\vert \nabla_{x'}\eta(x')\vert^2~dx' 
\end{align*}
of \eqref{final} is known. Hence, the normal derivative $\pl_{x_n} h(x_0)$ is determined. As a consequence, $\pl_{x_n}\gamma(x_0)$ can be reconstructed and hence the Theorem \ref{thm_2} follows.

   \section*{Acknowledgments}
			 MV is partially supported by Start-up Research Grant SRG/2021/001432  from the Science and Engineering Research Board, Government of India.

		\end{document}